\documentclass[11pt]{amsart}
\usepackage{latexsym,amssymb,amsmath}
\usepackage{multirow}
\textwidth=16cm
\topmargin=-5mm
\oddsidemargin=0mm
\evensidemargin=0mm
\textheight=22.5cm

\newtheoremstyle{custom}
  {3pt}
  {3pt}
  {\slshape}
  {}
  {\bfseries}
  {.}
  { }
   {}
\theoremstyle{custom}
\newtheorem{theorem}{Theorem}[section]

\newtheorem{proposition}[theorem]{Proposition}
\newtheorem{proposition/definition}[theorem]{Proposition/Definition}
\newtheorem{lemma}[theorem]{Lemma}
\newtheorem{corollary}[theorem]{Corollary}

\theoremstyle{definition}
\newtheorem{definition}[theorem]{Definition}

\newtheorem{example}[theorem]{Example}

\theoremstyle{remark}
\newtheorem{remark}[theorem]{Remark}





\newtheoremstyle{exercise}
  {3pt}
  {6pt}
  {}
  {}
  {\bfseries}
  {:}
  { }
   {}
\theoremstyle{exercise}
\newtheorem{exercise}[theorem]{Exercise}
\newtheoremstyle{exercises}
  {3pt}
  {6pt}
  {}
  {}
  {\bfseries}
  {:}
  {\newline}
   {}

\theoremstyle{exercise}
\newtheorem{exercises}[theorem]{Exercises}








\input epsf
\def\boxit#1{\vbox{\hrule height1pt\hbox{\vrule width1pt\kern3pt
  \vbox{\kern3pt#1\kern3pt}\kern3pt\vrule width1pt}\hrule height1pt}}



\def\BC{\mathbb C}\def\BO{\mathbb O}\def\BS{\mathbb S}

\def\BP{\mathbb P}
\def\pp#1{\mathbb P^{#1}}

\def\pp#1{{\mathbb P}^{#1}}
\def\tdim{{\rm dim}}

\def\hd{,...,}
\def\ww{\wedge}

\def\inv{{}^{-1}}

\def\cE{{\mathcal E}}

\def\cS{{\mathcal S}}

\def\CC{\mathbb C}

\def\11{\mathbf 1}
\def\PP{\mathbb P}

\def\l{\lambda}

\def\s{\sigma}

\def\ot{{\mathord{ \otimes } }}
\def\op{{\mathord{\,\oplus }\,}}
\def\otc{{\mathord{\otimes\cdots\otimes}\;}}

\def\ra{{\mathord{\;\rightarrow\;}}}

\def\dim{{\rm dim}\;}
\def\La#1{\Lambda^{#1}}

\def\op{\oplus}
\def\BO{\mathbb{O}}

\def\ep{\epsilon}
\def\op{\oplus}


\def\s{\sigma}

\def\l{\lambda}

\def\BP{\mathbb  P}
\def\BC{\mathbb  C}

\def\pp#1{\mathbb  P^{#1}}

\def\BS{\mathbb  S}

\def\ep{\epsilon}

\def\hd{, \hdots ,}

\def\inv{{}^{-1}}

\def\La#1{\Lambda^{#1}}

\def\pp#1{\mathbb  P^{#1}}

\def\ra{\rightarrow}

\def\tdim{\operatorname{dim}}

\def\tlim{\lim}
\def\tmod{\operatorname{mod}}

\def\tmin{\operatorname{min}}
\def\tmax{\operatorname{max}}

\def\ww{\wedge}

\def\ctimes{\times \cdots\times}

\def\be{\begin{equation}}
\def\ene{\end{equation}}
\def\aaa{{\mathbf{a}}}
\def\bbb{{\mathbf{b}}}
\def\ccc{{\mathbf{c}}}\def\bv{{\mathbf{v}}}\def\bw{{\mathbf{w}}}

\def\dual{{^\vee}}
\def\xduals#1{X^{\vee}_{#1}}

\newcommand{\Id}{\operatorname{Id}}







\numberwithin{equation}{section}
\numberwithin{theorem}{section}


\def\set#1{\left\{#1\right\}}
\def\fromto#1#2{#1, \dotsc, #2}
\def\setfromto#1#2{\set{\fromto{#1}{#2}}}

\def\ccI{\mathcal I}
\def\ccO{\mathcal O}

\newcommand{\JaBu}{Jaros\L{}aw Buczy\'n{}ski}
\newcommand{\shortJaBu}{J.~Buczy\'n{}ski}

\DeclareMathOperator{\codim}{codim}
\def\ur{\underline{\mathbf{R}}}

\renewcommand{\theenumi}{(\roman{enumi})}
\renewcommand{\labelenumi}{\theenumi}

\def\segduals{\mathcal S  e  g^{\vee}_*}
\def\bv{\bold v}\def\bw{\bold w}

\title[Ranks of tensors]{Ranks of tensors and  a generalization of secant varieties}
\author[\shortJaBu{} \& J.M.~Landsberg]{\JaBu \and  J.M. Landsberg}

\begin{document}

\begin{abstract} We introduce subspace rank as a tool for studying ranks of tensors and $X$-rank more generally.
We derive a new upper bound for the rank of a tensor and determine the ranks of partially symmetric
tensors in $\BC^2\ot \BC^{\bbb}\ot \BC^{\bbb}$. We review the literature from a geometric perspective.
\end{abstract}
 \thanks{\shortJaBu{} supported by Marie Curie Outgoing Fellowship ``Contact Manifolds'',
         Landsberg supported by NSF grant   DMS-1006353}
\email{jabu@mimuw.edu.pl, jml@math.tamu.edu}

\address{Institut Fourier \\ Universit\'e Grenoble I \\
100 rue des Maths, BP 74\\
38402 St Martin d'H\`eres cedex, France\\
and
Institute of Mathematics of the
Polish Academy of Sciences\\
ul. \'Sniadeckich 8\\
P.O. Box 21\\
00-956 Warszawa, Poland}

\address{Department of Mathematics\\
Texas A\&M University\\
Mailstop 3368\\
College Station, TX 77843-3368, USA}

\maketitle

\section{Introduction}

A central problem in many areas (signal processing, algebraic statistics, complexity theory etc.,
  see  e.g., \cite{BCS,MR2447451,MR2205865,MR2383305})  is to understand the ranks and border ranks of  tensors,
and the analogous notions for symmetric and partially symmetric tensors.  There have been many recent
contributions to this study using methods from algebraic geometry, a few examples are
\cite{MR2452824,BGI,jabu_ginensky_landsberg_Eisenbuds_conjecture,MR2392585,MR2383305,LTrank}.
In particular, the notions of {\it $X$-rank} and {\it $X$-border rank}, defined below, allow  one to treat questions
regarding tensors, symmetric tensors, and partially symmetric tensors uniformly.
In this paper we prove a new upper bound for the maximum rank of a tensor as a Corollary of
a new upper bound for $X$-rank in general, see Proposition \ref{prop:bound_on_superspace_rank_LT} and Corollary \ref{tenrkbnd}. For example, we show the maximum rank of a tensor
in $\BC^n\ot \BC^n\ot \BC^n$ is bounded by $n^2-n-1$. (The previous known upper bound had
been $n^2$.)  We also show that the partially symmetric version
of Comon's conjecture holds in $\BC^2\ot \BC^{\bbb}\ot \BC^{\bbb}$. For tensors in $\BC^2\ot \BC^{\bbb}\ot \BC^{\ccc}$,
with $\bbb\leq 3$, where there exist normal forms, for each normal form we give a geometric interpretation
of the point and determine its rank and border rank. A substantial part of this paper is   essentially
expository. The notion of {\it subspace border rank}   dates back to Terracini. We define a corresponding
notion of subspace rank, review the literature and establish basic properties. The key to studying tensors
in $\BC^2\ot \BC^{\bbb}\ot \BC^{\ccc}$ is Kronecker's normal form for pencils of matrices, which we review as a prelude
to a proof of the Grigoriev-Ja'Ja'-Teichert theorem on ranks of pencils, which we generalize to symmetric tensors, Theorem \ref{sympencilthm}.

 \subsection{Definitions}
Let $A_1\hd A_k,A,B,C,V,W$ be complex
vector spaces of dimensions, respectively,
 $\aaa_1\hd \aaa_k$, $\aaa,\bbb,\ccc,\bv,\bw$.  Let  $Seg(\BP A_1\ctimes \BP A_k)
\subset \BP (A_1\otc A_k)$ denote the Segre variety of rank one
tensors.  Since the property of   being rank one is invariant under
scalar multiplication,   it is natural to  quotient out by   rescalings  and work in the corresponding projective
space.
Given a tensor
$p\in A_1\otc A_k$, define the {\it (tensor) rank}  (resp.~{\it (tensor) border rank})
$\mathbf{R}(p)= \mathbf{R}_{Seg(\BP A_1\ctimes \BP A_k)}([p])$ (resp.
${\ur}(p)= {\ur}_{Seg(\BP A_1\ctimes \BP A_k)}([p])$)
of $p$ to be the smallest $r$ such that there exist
\[
  [a^1_1\otc a^1_k]\hd [a^r_1\otc a^r_k]\in Seg(\BP A_1\ctimes \BP A_k)
\]
 with $a^i_j\in A_j$ such that
$p=a^1_1\otc a^1_k+\cdots + a^r_1\otc a^r_k$  (resp.~that there exist
curves $a^i_j(t)\subset A_j$ such that
$[p]=\tlim_{t\ra 0} [a^1_1(t)\otc a^1_k(t)+\cdots + a^r_1(t)\otc a^r_k(t)]$).
Here we write limits in projective space $\BC\pp N$ as opposed to limits
in affine space $\BC^{N+1}$. Although essentially equivalent, taking limits in
projective space has the advantage of eliminating a scalar to worry about.
These definitions agree  with the definitions
in the tensor literature.

More generally, for a projective variety $X\subset \BP V$ not contained in
a hyperplane, the
$X$-rank of $[p]\in \BP V$, $\mathbf{R}_X([p])$,  is defined to be the smallest $r$ such that
there exist $x_1\hd x_r\in X$ such that 
$[p]$ is in the linear span 
of  $x_1\hd x_r$
and the $X$-border rank $\ur_X([p])$ is defined as the smallest integer, such that $p$
  is a limit of points of $X$-rank $r$. When $X=Seg(\BP A_1\ctimes \BP A_k)$ one recovers
the definitions of (tensor) rank and border rank.

\medskip

Let    $S^dV$ (resp. $\La d V$) denote the space of  symmetric (resp. skew-symmetric) tensors in $V^{\ot d}$.
Let $v_d(\BP V)\subset \BP (S^d V)$
denote the Veronese variety of $d$-th powers of linear forms.  Let
$G(k, V)$ denote the Grassmannian of $k$-planes through the origin
in $V$.
The Grassmannian may be viewed as a projective variety in $\BP(\La k V)$,
via the {\it Pl\"ucker embedding}, where, for a $k$-plane
$E$, take a basis $e_1\hd e_k$ and map  $E$ to $[e_1\ww\cdots \ww e_k]$.
 For a subset $Z\subset \BP V$,
  let $\langle Z\rangle \subseteq V$ denote its linear span and $\hat Z\subset V$ the associated cone in $V$.

The following definition can be traced back to Terracini \cite{Terracini2}, who asked:
 given $d,k,n$, what is the smallest  $r$ such that a general collection of $k$
  homogeneous polynomials of degree $d$ in $n$ variables
 may be expressed as linear combinations
of $d$-th powers of the same linear forms
$l_1\hd l_r$?
Grassmann secant varieties were the  subject of the 2001
EAGER Summer School PRAGMATIC, see \cite{MR2009896}.

 \begin{definition}\label{subspacerk}
For a variety $X\subset \BP V$ not contained in a hyperplane,  and $E\in G(k,V)$, define
  $\mathbf{R}_X(E)$, the {\it $X$-rank of $E$}  to be the smallest
$r$, such that there exists $x_1\hd x_r\in X$ and
$E\subset \langle x_1\hd x_r\rangle$. Define
$\s_{r,k}^0(X)\subset G(k,V)$ to be the set of $k$-planes of
$X$-rank at most $r$, and let $\s_{r,k}(X)\subset G(k,W)$ denote
its Zariski closure, called the  \emph{Grassmann secant variety}. When $k=1$ we write
$\s_{r,1}(X)=\s_r(X)\subset \BP V$ for the $r$-th secant variety of $X$.
Our notation is such that $\s_1(X)=X$.
We remark that $\s_{r,k}(X)$ is denoted $G_{k-1,r-1}(X)$ in much of the literature.
\end{definition}

Terracini's question is: what is  the smallest value of $r$ such that $\s_{r,k}(v_d(\BP V))=G(k,V)$?

The notion of subspace border rank re-appeared in the complexity literature.
Strassen \cite{Strassen505} had the idea to reduce the study of rank and border rank of  tensors in
$A\ot B\ot C$ to the study of linear subspaces of spaces of
endomorphisms, by considering $p\in A\ot B\ot C$ as a linear map $A^*\ra B\ot C$ and studying the image.
Strassen's famous equations for
the set of tensors of border rank
at most three in $\BC^3\ot\BC^{\bbb}\ot \BC^{\bbb}$,  are,
after fixing an identification $C\simeq B^*$ so the image may be thought of as a space
of endomorphisms,
   exactly the expression
that  the endomorphisms commute (see \cite{BCS,LMsecb} for discussions).

\medskip

\begin{remark}
    Grassmann secant varieties have recently appeared in
  \cite{chiantini_coppens_Grassmannians_of_secant_varieties},
    \cite{ciliberto_cools_Grassmann_secant_extremal_varieties},
    \cite{cools_Singular_locus_of_Grassmann_secant},  \cite{chipalkatti_Waring_locus},
        \cite{chipalkatti_geramita_Artin_level_algebras}, and
        \cite{carlini_chipalkatti_Waring_for_several_forms}. These articles are primarily interested
in studying the dimension of $\s_{r,k}(X)$, especially in the case $X=v_d(\BP W)$ is a Veronese variety.
While the $X$-border rank of linear spaces has
been well studied, to our knowledge, our paper initiates the study of $X$-rank of linear spaces.
The $X$-(border) rank of linear spaces is related to $Seg(\PP^{\aaa-1} \times X)$-(border) rank of points,
  see Theorem~\ref{thm:can_calculate_rank_as_superspace_rank} below.
The special cases where $X$ is a Veronese variety arise in applications (see e.g., \cite{MR2473189} \cite{MR2517853}).
\end{remark}

\subsection{Overview}
In \S\ref{stdrevistsect} we revisit two standard results on ranks of tensors to generalize
and strengthen them
(Proposition~\ref{exprthm} and Theorem~\ref{thm:can_calculate_rank_as_superspace_rank}).
In \S\ref{genlfactssect} we establish basic properties about the varieties $\s_{r,k}(X)$ and
$X$-ranks of linear spaces. We briefly mention in \S\ref{coincidesect} a few cases where ranks
and border ranks of tensors agree.
In
\S\ref{jajasect} we give an exposition of  work of Grigoriev,  Ja'Ja'
and Teichert \cite{MR0453768,MR519843,MR521052,Teichert} where the
 ranks of tensors in
$\BC^2\ot \BC^{\bbb}\ot \BC^{\ccc}$ are completely determined.
When $\bbb\leq 3$ we recall 
in \S\ref{section_orbits}
the  normal forms for
such tensors from \cite{MR1830474}   and give geometric interpretations
for the points of a given normal form.  In \S\ref{parsymsect} we apply the results
to the case of partially symmetric tensors, proving an analog of the
Grigoriev-Ja'Ja'-Teichert theorem. 
In \S\ref{agproofs}
we conclude with proofs of results stated
in \S\ref{genlfactssect} that require additional  terminology from algebraic geometry.

\subsection{Acknowledgments}
We thank J. Weyman
for   pointing out
  there were orbits missing
in the list in \S\ref{section_orbits}
 in an earlier version of this paper.
We also thank  anonymous readers of an earlier version of this
paper for numerous corrections regarding the history of questions
and for useful suggestions regarding the exposition, in particular correcting an
error in Lemma 8.1.
This paper grew out of questions raised at the
2008 AIM workshop {\it Geometry and representation theory of tensors
for computer science, statistics and other areas}, and the authors
thank AIM and the conference participants for inspiration.

\section{Generalizations of standard results on ranks of Segre products}\label{stdrevistsect}

Let  $Y\subset \BP W$ be a variety and let
$X=Seg(\BP A\times Y)\subset \BP (A\ot W)$ be the
Segre product of $Y$ with the projective space $\BP A$.

\begin{proposition}\label{exprthm} Let $A'\subset A$ be
a linear subspace and let  $p\in \BP (A'\ot W)$. Then any expression
$p=[v_1+\cdots +v_s]$
such that some $[v_j]\not\in
X\cap \BP (A'\ot W)$ has $s>\mathbf{R}_X(p)$.

For any expression $p=\tlim_{t\ra 0}  [v_1(t)+\cdots +v_s(t)]$ with $[v_j(t)] \in X$   there exist  $w_1(t),\dotsc,w_s(t)$,
        such that $p=\tlim_{t\ra 0}  [w_1(t)+\cdots +w_s(t)]$ with $[w_j(t)] \in X \cap \BP(A' \otimes W)$.

In particular $\mathbf{R}_X|_{\BP(A' \otimes W)} = \mathbf{R}_{X \cap \BP(A' \otimes W)}$
and $\ur_X|_{\BP(A' \otimes W)} = \ur_{X \cap \BP(A' \otimes W)}$.
\end{proposition}

Proposition~\ref{exprthm} recovers and strengthens (the \lq\lq moreover\rq\rq\  statement below) the following standard fact
(e.g., \cite[Prop. 14.35]{BCS}, \cite[Prop. 3.1]{MR2447444}):

\begin{corollary}\label{zerowash} Let $n>2$.
Let $T\in A_1\otc A_n $ have rank $r$. Say
$T\in A_1'\otc A_n'$, where $A'_j\subseteq A_j$,   with at least one inclusion proper.
Then any expression   $T=\sum_{i=1}^{\rho}u^1_i\otc u^n_i$  with
some $u^s_j\not\in A_s'$ has $\rho>r$. In particular
$\mathbf{R}_{Seg(\BP A_1\ctimes \BP A_n)}(T)=\mathbf{R}_{Seg(\BP A_1'\ctimes \BP A_n')}(T)$.
Moreover
$\ur_{Seg(\BP A_1\ctimes \BP A_n)}(T)=\ur_{Seg(\BP A_1'\ctimes \BP A_n')}(T)$.
\end{corollary}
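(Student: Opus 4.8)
The plan is to deduce the corollary from Theorem \ref{exprthm} applied with $Y$ itself a Segre variety. For an index $s$, set $W:=\bigotimes_{l\neq s}A_l$ and let $Y:=Seg\bigl(\prod_{l\neq s}\BP A_l\bigr)\subset\BP W$; under the canonical identification $A_s\ot W\cong A_1\otc A_n$ the variety $X:=Seg(\BP A_s\times Y)$ is carried onto $Seg(\BP A_1\ctimes \BP A_n)$, so $R_X([q])=R_{Seg(\BP A_1\ctimes \BP A_n)}([q])$ for every $[q]$, and likewise for $\ur$. Moreover a point $[a\ot y]\in X$ with $a\in A_s$ and $[y]\in Y$ lies in $\BP(A'_s\ot W)$ precisely when $a\in A'_s$, so
\[
  X\cap\BP(A'_s\ot W)=Seg(\BP A'_s\times Y);
\]
and since $T\in A'_1\otc A'_n\subseteq A'_s\ot W$, Theorem \ref{exprthm} is available with $A:=A_s$, $A':=A'_s$, $p:=[T]$.

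For the statement on expressions, suppose $T=\sum_{i=1}^{\rho}u^1_i\otc u^n_i$ with $u^{s}_{j}\notin A'_{s}$ for some pair $(j,s)$. Discard the vanishing summands; if the $j$-th one vanishes then $r=R_X([T])\le\rho-1<\rho$ and we are done, so assume every summand is nonzero and regroup it as $v_i:=u^s_i\ot\bigl(\bigotimes_{l\neq s}u^l_i\bigr)$, so that $[T]=[v_1+\dotsb+v_\rho]$ with each $[v_i]\in X$. Regarded as a linear map $W^*\ra A_s$, the nonzero tensor $v_j$ has image $\langle u^s_j\rangle\not\subseteq A'_s$, hence $v_j\notin A'_s\ot W$ and $[v_j]\notin X\cap\BP(A'_s\ot W)$. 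The first part of Theorem \ref{exprthm} then forces $\rho>R_X([T])=r$, which is the asserted inequality.

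For the two equalities of ranks and border ranks I would shrink the factors one at a time. Given $B_l$ with $A'_l\subseteq B_l\subseteq A_l$ for all $l$, $B_l=A'_l$ for $l<k$ and $B_l=A_l$ for $l\ge k$, apply the ``in particular'' clause of Theorem \ref{exprthm} with $A:=A_k$, $A':=A'_k$, $W:=\bigotimes_{l\neq k}B_l$ and $Y:=Seg\bigl(\prod_{l\neq k}\BP B_l\bigr)$; this is legitimate because $T\in A'_1\otc A'_n\subseteq A'_k\ot W$ and, exactly as above, $Seg(\BP A_k\times Y)\cap\BP(A'_k\ot W)=Seg(\BP A'_k\times Y)$. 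It yields that replacing the $k$-th factor $A_k$ by $A'_k$ changes neither $R_{Seg}(T)$ nor $\ur_{Seg}(T)$, and chaining this over $k=1,\dotsc,n$ turns $Seg(\BP A_1\ctimes \BP A_n)$ into $Seg(\BP A'_1\ctimes \BP A'_n)$, finishing the proof. I do not expect a real obstacle here: all the substance is packaged in Theorem \ref{exprthm}, and the only points demanding care are the harmless re-association of the iterated Segre and the elementary identity $Seg(\BP A_k\times Y)\cap\BP(A'_k\ot W)=Seg(\BP A'_k\times Y)$, proved by the image-of-a-rank-one-map computation used above.
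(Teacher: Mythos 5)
Your proposal is correct and follows the paper's intended route: the paper presents Corollary \ref{zerowash} as an immediate consequence of Theorem \ref{exprthm} (without writing out the deduction), obtained exactly as you do by viewing $Seg(\BP A_1\ctimes\BP A_n)$ as $Seg(\BP A_s\times Y)$ with $Y$ the Segre of the remaining factors and then shrinking one factor at a time. Your added care about vanishing summands and the identity $Seg(\BP A_k\times Y)\cap\BP(A'_k\ot W)=Seg(\BP A'_k\times Y)$ just makes explicit what the paper leaves implicit.
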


\begin{remark}
  In the language of \cite[\S4]{jabu_ginensky_landsberg_Eisenbuds_conjecture}
  Proposition~\ref{exprthm} says (in particular) that $(X, A'\otimes W)$ is $rpp$ and $brpp$,
  i.e., a (border) rank preserving pair.
\end{remark}

\begin{proof}[Proof of Proposition  \ref{exprthm}.]
 Choose a complement $A''\subset A$  to $A'$ so that  $A=A'\op A''$.

To prove the assertion regarding rank, write $p=v_1+\cdots + v_s$,
where $v_j=a_j\ot y_j$ with $y_j\in \hat Y$.
Write $a_j=b_j+c_j$ with $b_j\in A'$ and $c_j\in A''$. Since
$p\in A'\ot W$, we have $\sum c_j\ot y_j=0$.
Let $\{ e_m\}$ be a basis of
$A''$. We have $c_j=\xi^m_j e_m$ so
$\sum_j \xi^m_j y_j=0$ for all $m$. Say e.g.,  $\xi^1_s\neq 0$, then
we can write $y_s$ as a linear combination of $y_1\hd y_{s-1}$ and
obtain an expression of rank $s-1$.

To prove the border rank assertion,
for each $v_j(t)=a_j(t)\ot y_j(t)$ write $a_j(t)=b_j(t)+c_j(t)$ with
$b_j(t) \subset A'$ and $c_j(t) \subset A''$.
Also write $p(t) := \sum v_j(t)$,
so that $[p]=\tlim_{t\ra 0} [p(t)]$.
Since $[p] \in \BP (A'\otimes W)$,
  \[
    [p]=\tlim_{t\ra 0} [ \sum b_j(t)\ot y_j(t)],
  \]
proving the claim.
\end{proof}

 The following bound on rank (in the three factor case) appears in
\cite[p 231, IV.3]{MR0398167}, as well as
 \cite[Prop. 14.45]{BCS} as an inequality, where it is said to be classical,
  and as an equality  in
 \cite[Thm 2.4]{Friedland3tensor}. It is also used e.g., in \cite{MR521052}:

\begin{proposition}\label{rankbndthm}  Let $\phi\in A_1\otc A_n$.
Then $\mathbf{R}(\phi)$ equals  the number of points of $\hat Seg(\BP A_2\ctimes \BP A_n)$
needed to span a space containing $\phi(A_1^*)\subset A_2\otc A_n$ (and similarly for the permuted statements).
Here we have interpreted $\phi$ as a linear map:
  $\phi : (A_1)^* \to A_2\otc A_n$.
\end{proposition}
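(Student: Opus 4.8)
The plan is to prove the stronger, expression‑level statement from which the rank equality follows: writing $X=Seg(\BP A_2\ctimes \BP A_n)\subset \BP W$ with $W=A_2\otc A_n$ and identifying $A_1\otc A_n$ with $A_1\ot W$, I will show that $\phi$ admits an expression as a sum of $r$ decomposable tensors if and only if the subspace $\phi(A_1^*)\subset W$ is spanned by $r$ points of $X$; taking the minimum of $r$ on both sides gives $R(\phi)=R_X(\phi(A_1^*))$ in the sense of Definition~\ref{subspacerk}, and the permuted statements follow by relabeling the factors. The key elementary facts used are: a nonzero tensor of $A_1\ot W$ lies on the cone over $Seg(\BP A_1\ctimes \BP A_n)$ precisely when it has the form $a_1\ot w$ with $w\in \hat X$; and, for $\phi\in A_1\ot W$ viewed as the map $A_1^*\to W$, $\alpha\mapsto\phi(\alpha)$ obtained by contracting the first factor, one has the tautological expansion $\phi=\sum_j e_j\ot \phi(e^j)$ for any basis $(e_j)$ of $A_1$ with dual basis $(e^j)$ of $A_1^*$.

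For the inequality $R_X(\phi(A_1^*))\le R(\phi)$, I take an optimal expression $\phi=\sum_{i=1}^r v_i$ with $[v_i]\in Seg(\BP A_1\ctimes \BP A_n)$ and $r=R(\phi)$; discarding zero terms, $v_i=a_1^i\ot w_i$ with $[w_i]\in X$. Then for every $\alpha\in A_1^*$ we get $\phi(\alpha)=\sum_{i=1}^r \langle \alpha,a_1^i\rangle\, w_i\in\langle w_1,\dotsc,w_r\rangle$, so $\phi(A_1^*)\subset\langle w_1,\dotsc,w_r\rangle$ with each $[w_i]\in X$, whence $R_X(\phi(A_1^*))\le r$.

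For the reverse inequality $R(\phi)\le R_X(\phi(A_1^*))$, suppose $\phi(A_1^*)\subset\langle w_1,\dotsc,w_r\rangle$ with $[w_i]\in X$. Since each $\phi(e^j)$ lies in $\phi(A_1^*)$, write $\phi(e^j)=\sum_{i=1}^r c^j_i w_i$; substituting into $\phi=\sum_j e_j\ot\phi(e^j)$ and interchanging the order of summation gives $\phi=\sum_{i=1}^r b_i\ot w_i$ with $b_i=\sum_j c^j_i e_j\in A_1$. Each nonzero summand $b_i\ot w_i$ is decomposable in $A_1\otc A_n$ because $w_i\in\hat X$, so this exhibits $\phi$ as a sum of at most $r$ points of $Seg(\BP A_1\ctimes \BP A_n)$, i.e.\ $R(\phi)\le r$. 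Taking $r=R_X(\phi(A_1^*))$ and combining with the previous paragraph yields the equality.

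I do not expect a genuine obstacle here: the entire content is the translation between the two descriptions, and the only points needing a little care are bookkeeping ones — dropping vanishing terms so that ``at most $r$ summands'' really bounds the rank by $r$, and checking that decomposability of $v_i$ in $A_1\otc A_n$ is exactly the condition ``$v_i=a_1^i\ot w_i$ with $w_i$ decomposable in $W$''. If a cleaner write‑up is desired one could first use Theorem~\ref{exprthm} (or Theorem~\ref{thm:can_calculate_rank_as_superspace_rank}) to reduce to the case where $\phi\colon A_1^*\to W$ is injective, but the direct argument above already handles the general case.
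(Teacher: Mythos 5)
Your proof is correct, and it is essentially the argument the paper gives: the paper subsumes this proposition into Theorem~\ref{thm:can_calculate_rank_as_superspace_rank} (with $Y$ an arbitrary variety in $\BP W$ and $X=Seg(\BP A\times Y)$), whose rank case is proved by exactly your two computations — reading off $\phi(A_1^*)\subset\langle w_1,\dotsc,w_r\rangle$ from a decomposition, and conversely regrouping $\phi=\sum_j e_j\ot\phi(e^j)$ as $\sum_i b_i\ot w_i$. No issues.
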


Proposition \ref{rankbndthm} has the following generalization, whose border rank version appeared
 in \cite{MR2009896}, and whose  proof dates back to Terracini \cite{Terracini2}:

\begin{theorem}\label{thm:can_calculate_rank_as_superspace_rank}
  Let $Y \subset \BP W$ and let  $X:= Seg(\BP A \times Y)$.
Given $ p \in A \otimes W$, then
    $\mathbf{R}_X ([p]) = \mathbf{R}_Y(p(A^*))$ and $\ur_X([p])=\ur_Y(p(A^*))$,  where on the right hand
sides of the equations we have interpreted $p$ as a linear map:
  $p : A^* \to W$.
\end{theorem}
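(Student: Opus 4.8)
The plan is to prove both equalities by exhibiting explicit correspondences between decompositions on the two sides, using the observation that a point $a\otimes y\in X$, viewed as a linear map $A^*\to W$, has image the line $\langle y\rangle$ (when $a\neq 0$).

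\medskip

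\noindent\emph{Rank.} First I would show $R_X([p])\ge R_Y(p(A^*))$. Suppose $p=v_1+\cdots+v_r$ with $v_i=a_i\otimes y_i$, $[y_i]\in Y$, and $r=R_X([p])$. Then as a linear map $p(\alpha)=\sum_i \alpha(a_i)\,y_i$ for $\alpha\in A^*$, so $p(A^*)\subseteq\langle y_1,\dots,y_r\rangle$, which exhibits $p(A^*)$ as a subspace spanned by $r$ points of $Y$; hence $R_Y(p(A^*))\le r$. For the reverse inequality $R_Y(p(A^*))\ge R_X([p])$: if $p(A^*)\subseteq\langle y_1,\dots,y_s\rangle$ with $[y_i]\in Y$ and $s=R_Y(p(A^*))$, pick a basis $\alpha^1,\dots,\alpha^{\aaa}$ of $A^*$; then each $p(\alpha^j)=\sum_i c^j_i y_i$ for scalars $c^j_i$, and $p=\sum_j a_j\otimes p(\alpha^j)$ where $\{a_j\}$ is the dual basis of $A$. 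Substituting and regrouping, $p=\sum_i \big(\sum_j c^j_i a_j\big)\otimes y_i$, a sum of $s$ elements of $\hat X$, so $R_X([p])\le s$. (One should note the harmless case $p(A^*)=0$, i.e.\ $p=0$.)

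\medskip

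\noindent\emph{Border rank.} The same two manipulations work with curves, but one must be careful that the limiting linear subspace behaves well. For $\ur_X([p])\le\ur_Y(p(A^*))$: an approximating family $p(A^*)=\tlim_{t\to0}\langle y_1(t),\dots,y_s(t)\rangle$ in $G(k,W)$ with $[y_i(t)]\in Y$ lets one write, for each fixed $t$ small, $p_t:=\sum_j a_j\otimes (\text{coordinates of }p(\alpha^j)\text{ in a frame of }\langle y_i(t)\rangle)$; letting $t\to0$ recovers $p$, exhibiting $p$ as a limit of rank-$\le s$ points of $\hat X$. For the reverse direction one runs the image argument along a curve: if $[p]=\tlim_{t\to0}[\sum_i a_i(t)\otimes y_i(t)]$, then $p_t(A^*)\subseteq\langle y_1(t),\dots,y_s(t)\rangle$ for each $t$; passing to a subsequence/limit in the Grassmannian gives a $k$-plane of $Y$-border rank $\le s$ containing $\lim p_t(A^*)$, and one checks this limit plane contains $p(A^*)$ by semicontinuity of rank. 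Here Theorem \ref{exprthm} is the natural tool to clean up the ambient space, and the definition of $\s_{r,k}(Y)$ as a Zariski closure in $G(k,W)$ is exactly what makes the limiting plane legitimate.

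\medskip

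\noindent The main obstacle is the border-rank direction, specifically matching limits across the map $p\mapsto p(A^*)$: the function $q\mapsto q(A^*)\in\bigcup_k G(k,W)$ is not continuous (rank can drop in the limit), so one cannot simply take $\tlim$ inside. I would handle this by working in a fixed Grassmannian $G(k,W)$ with $k=\dim p(A^*)$, using that $\operatorname{rank}(p_t)\ge k$ for $t$ near $0$ to get a well-defined curve $t\mapsto$ (some $k$-dimensional subspace of $p_t(A^*)$) and extracting a limit, then invoking lower semicontinuity to show the limit contains $p(A^*)$ and upper semicontinuity of $\ur_Y$ on $\s_{r,k}(Y)$; alternatively, one can reduce to the rank statement plus the closedness of $\s_r(X)$ together with Theorem \ref{exprthm}. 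Everything else is bookkeeping with bases of $A$ and $A^*$.
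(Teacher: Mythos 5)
Your proposal is correct and follows essentially the same route as the paper: the two rank inequalities via the image of a decomposition and regrouping with a dual basis of $A^*$, and for border rank the same curve constructions, with the discontinuity of $q\mapsto q(A^*)$ handled exactly as in the paper by invoking Theorem \ref{exprthm} to shrink $A$ so that $p(t)$ has constant maximal rank and the images form a genuine curve in a fixed Grassmannian limiting to $p(A^*)$. The only cosmetic difference is that the paper treats the rank statement as the constant-curve special case of the border rank argument rather than proving it separately first.
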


\begin{proof}
We  prove the border rank statement,
  the rank statement is the special case where each curve is constant.

To see  $\ur_X ([p]) \ge \ur_Y(p(A^*))$, assume $\ur_X([p])=r$ and write
  \[
     p(t)= a_1(t) \otimes y_1(t) + \dotsb + a_r(t) \otimes y_r(t)
  \]
  where $a_i(t) \in A$ and $y_i(t) \in \hat Y$ and
  \[
     [p] = \tlim_{t\ra 0} [p(t)].
  \]
 If $A' \subset A$ is such that $[p] \in \BP(A' \otimes W)$,
  then by Proposition~\ref{exprthm}   we may assume
  \[
    p(t) \in A' \otimes W \text{ and } a_i(t) \in A'.
  \]
Replacing   $A$ by a smaller vector space if necessary, we may assume
$p(t): A^*\ra W$ is injective for all values of $t$ sufficiently close to $0$.
Thus the image of $p(t): A^* \ra W$ determines a curve in the Grassmannian $G(\dim A, W)$,
Since $[p]=\tlim_{t\ra 0}[p(t)]$ as a linear map defined up to scale,
$p(A^*)\subset \lim_{t\ra 0}\langle y_1(t)\hd y_r(t)\rangle$,
where the limit is taken in $G(r, W)$.
Thus $\mathbf{R}_Y(p(t)(A^*)) \le r$ and $\ur_Y(p(A^*)) \le r = \ur_X ([p])$.

To see $\ur_X ([p]) \le \ur_Y(p(A^*))$, assume $\ur_Y(p(A^*))=r$ and that there exist curves
$y_j(t)\subset \hat Y$, such that $p(A^*)\subset \tlim_{t\ra 0}\langle y_1(t)\hd y_r(t)\rangle$,
where again we may assume the dimension of the span of the $y_s(t)$ is constant for $t\neq 0$ and
we are taking the limit in the appropriate Grassmannian. Let $a^1\hd a^{\dim A}$ be a basis of
$A^*$. Write $p(a^j)=\tlim_{t\ra 0}\sum_s c^{js}(t)y_s(t) $ for some functions $c^{js}(t)$,
so $p=\sum_j a_j\ot [\tlim_{t\ra 0}\sum_s c^{js}(t)y_s(t)]$.
Consider the curve
\[
   p(t) =\sum_{j,s} a_j\ot c^{j s}(t) y_s(t) =\sum_{ s=1}^r \left[ \sum_j c^{j s}(t) a_j \right] \ot y_s(t) .
\]
Thus $\mathbf{R}_X([p(t)]) \le r$ for $t\neq 0$ and since $\tlim_{t\ra 0}[p(t)]=[p]$, the claim $\ur_X([p])\leq r$ follows.
\end{proof}

\section{General facts about rank and border rank of linear spaces}\label{genlfactssect}

The following facts are immediate consequences of the definitions of $\mathbf{R}_X(E),\ur_X(E)$:

\begin{proposition}\label{elemrank}
 Let $X\subset \BP V=\pp N$ be a variety of dimension $n$ not
contained in a hyperplane, let   $E\in G(k,V)$. Then
\begin{enumerate}
\item $\mathbf{R}_X(E)\ge \ur_X(E) \ge k$ and $\sigma_{r_1,k}(X)\subset \sigma_{r_2,k}(X)$ whenever $r_1 \le r_2$.
\item If $X$ is irreducible and $E\in \s_{r,k}(X)$ is a general point, then $\mathbf{R}_X(E)= \ur_X(E)$.
\item \label{item_general_point_in_sigma_r_r}
$\mathbf{R}_X(E)= k$ if and only if the reduced points of $X\cap \BP E$ span $\BP E$.
\item \label{item_measure_rank_by_sigma_r_r}
      For $r\le N+1$,   $\mathbf{R}_X(E) \le r$ if and only if $\exists F\in \sigma^0_{r,r}(X)$
      such that $E \subset F$
      and similarly
      $\ur_X(E) \le r$ if and only if $\exists F\in \sigma_{r,r}(X)$
      such that $E \subset F$.
\item If $E = E' \oplus E''$, then $\mathbf{R}_X(E) \le \mathbf{R}_X(E') + \mathbf{R}_X(E'')$.
\end{enumerate}
\end{proposition}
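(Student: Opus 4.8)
The plan is to obtain each item by unwinding Definition~\ref{subspacerk} (with the convention $\ur_X(E):=\min\{r:E\in\sigma_{r,k}(X)\}$), the only genuine input beyond this being one irreducibility statement for (ii) and two applications of the classical general position lemma. For (i): if $R_X(E)=r$ then $E\subseteq\langle x_1\hd x_r\rangle$ with $x_i\in X$, so the constant curves exhibit $E\in\sigma_{r,k}(X)$ and $\ur_X(E)\le r$; the span of fewer than $k$ points has dimension $<k$, so it cannot contain the $k$-dimensional $E$, whence $\sigma^0_{r,k}(X)$ and $\sigma_{r,k}(X)$ are empty for $r<k$ and $\ur_X(E)\ge k$; and adjoining arbitrary further points of $X$ to a spanning collection gives $\sigma^0_{r_1,k}(X)\subseteq\sigma^0_{r_2,k}(X)$ for $r_1\le r_2$, whose Zariski closures are nested as claimed.

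For (ii) I would first replace $r$ by the least $r_0$ with $\sigma_{r_0,k}(X)=\sigma_{r,k}(X)$; since by (i) the sets $\sigma_{j,k}(X)$ increase with $j$, a general point of $\sigma_{r,k}(X)$ is a general point of $\sigma_{r_0,k}(X)$. The crux is that $\sigma_{r,k}(X)$ is irreducible when $X$ is: over the dense open $U\subseteq X^{\times r}$ on which $x_1\hd x_r$ span a $\pp{\min(r,N+1)-1}$ — nonempty because $X$ is irreducible and nondegenerate — the incidence variety $\{(x_\bullet,E):E\subseteq\langle x_\bullet\rangle\}$ is a Grassmann bundle over $U$, hence irreducible, and one checks its image is dense in $\sigma_{r,k}(X)$. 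Granting this, $\sigma^0_{r,k}(X)$ is constructible (Chevalley) and dense in the irreducible $\sigma_{r,k}(X)$, so a general $E$ lies in $\sigma^0_{r,k}(X)$, i.e.\ $R_X(E)\le r$; while $\sigma_{r-1,k}(X)$ is a proper closed subset, so a general $E$ avoids it, i.e.\ $\ur_X(E)\ge r$; with $R_X(E)\ge\ur_X(E)$ from (i) all three numbers coincide.

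For (iii): if $R_X(E)=k$ then, since the span of $k$ points of $X$ has dimension $\le k=\dim E$, we must have $E=\langle x_1\hd x_k\rangle$ with the $x_i$ linearly independent; thus $[x_1]\hd[x_k]\in X\cap\BP E$ span $\BP E$. Conversely, if $X\cap\BP E$ spans $\BP E$, then $k$ of its points lift to a basis of $E$, and with (i) this gives $R_X(E)=k$. The ``in particular'' then follows because, when $\BP E$ is transverse to $X$ with $\tcodim X<k$, the intersection $\BP E\cap X$ is non-empty of dimension $k-1-\tcodim X$ and, by the general position lemma for nondegenerate varieties, spans $\BP E$; as a general $E$ is transverse to $X$, $R_X(E)=k$ generically and hence $\sigma_{k,k}(X)=G(k,N+1)$. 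Item (iv) is of the same flavour: $\tcodim X+1$ general points of $X$ span a general $\pp{\tcodim X}$ in $\BP V$ (this uses nondegeneracy, which forces $\deg X\ge\tcodim X+1$), so a general linear subspace of the pertinent dimension lies in the span of at most $\tcodim X+1$ points of $X$; together with (i) and passage to closures this fills the Grassmannian.

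For (v), the rank statement: given $R_X(E)\le r$, write $E\subseteq L:=\langle x_1\hd x_r\rangle$, and if $\dim L<r$ keep adjoining points of $X$ — possible since $X$ is nondegenerate and $r\le N+1$ — until the span has dimension exactly $r$, yielding $F\in\sigma^0_{r,r}(X)$ with $E\subseteq F$; the converse is immediate. The border-rank statement is a limiting argument: if $E\in\sigma_{r,k}(X)$, choose $E_m\to E$ with $R_X(E_m)\le r$, use the rank case to get $F_m\in\sigma^0_{r,r}(X)$ with $E_m\subseteq F_m$, and exploit compactness of $G(r,V)$ together with closedness of $\sigma_{r,r}(X)$ and of the flag condition $\{E\subseteq F\}$ to extract $F\in\sigma_{r,r}(X)$ with $E\subseteq F$; conversely, if $E\subseteq F$ with $F=\lim F_m$, $F_m\in\sigma^0_{r,r}(X)$, pick $k$-dimensional $E_m\subseteq F_m$ with $E_m\to E$ and pass to the limit of $R_X(E_m)\le r$. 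I expect the main obstacle to be the irreducibility of $\sigma_{r,k}(X)$ needed in (ii): the full incidence variety is generally reducible, since the fibre dimension jumps off the general-position locus, so one must isolate the component lying over that locus and verify it already dominates $\sigma_{r,k}(X)$. Everything else is a direct manipulation of the definitions, compactness of the Grassmannian, and the general position lemma.
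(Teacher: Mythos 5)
The paper gives no argument for this proposition at all --- it declares the items ``immediate consequences of the definitions'' --- so your write-up is supplying content the authors omitted, and on the whole it supplies it correctly. The unwinding in (i), the reduction in (ii) to the minimal $r_0$ with $\sigma_{r_0,k}(X)=\sigma_{r,k}(X)$, and the compactness/closed-flag-condition argument in (v) are all sound (for (v) one should note that over $\BC$ the Zariski closure of the constructible set $\sigma^0_{r,k}(X)$ coincides with its Euclidean closure, which is what licenses the sequential limits). The density step you flag as the main obstacle in (ii) does go through: if $E\subset\langle x_1\hd x_r\rangle$ with $x_i\in X$, discard points until the survivors are linearly independent and then, since $X$ is nondegenerate, adjoin general points of $X$ to restore a collection of $r$ points spanning a $\pp{\min(r,N+1)-1}$ whose span still contains $E$. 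Hence $\sigma^0_{r,k}(X)$ is exactly the image of the Grassmann bundle over your open set $U$, and irreducibility of $\sigma_{r,k}(X)$ follows.

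Two caveats concern the printed statement rather than your argument, but you should make them explicit. In (iii) you prove the ``in particular'' under $\tcodim X<k$, whereas the proposition says $\tcodim X\le k$; in the boundary case $\tcodim X=k$ a general $\BP E\cong\pp{k-1}$ is disjoint from $X$ (hence vacuously transversal), so $R_X(E)>k$ there, and $\dim\sigma_{k,k}(X)=kn<k(N+1-k)=\dim G(k,N+1)$, so the printed boundary case is actually false and your strict inequality is the correct hypothesis --- say so rather than silently substituting it. Item (iv) does not typecheck as printed ($\sigma_{r,N-n}(X)$ is a subset of $G(N-n,N+1)$, not of $G(r,N+1)$), and your treatment of it is correspondingly the vaguest part of the proposal (``a general linear subspace of the pertinent dimension''). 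What your sketch actually establishes, via the corrected (iii) applied with $k=\tcodim X+1$ together with (v), is that every $(N-n)$-plane is contained in an $(N-n+1)$-plane spanned by points of $X$, i.e.\ $\sigma_{N-n+1,\,N-n}(X)=G(N-n,N+1)$; you need to commit to this (or some other) reading of (iv) and prove that, since the literal statement cannot be proved as written.
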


A corollary of Theorem~\ref{thm:can_calculate_rank_as_superspace_rank} is the following.

\begin{corollary} \label{cor:bound_on_superspace_rank}
The maximum $X$-rank  of $p\in\PP(A \otimes W)$, where $X = Seg(\PP A \times Y)$ and $Y\subset \BP W$ is not contained
in a hyperplane,
is at most $\tdim W$. In particular, $\s_r(X)=\BP (A\ot W)$ for all $r\geq \tdim W$.
\end{corollary}

 For any subvariety $X\subset \PP V$ and $p \in \PP V$,
  one has $\mathbf{R}_X(p) \le \dim V - \dim X$, see  \cite[Prop.~5.1]{LTrank} .
In the case of $X= Seg(\PP A \times Y)$ the upper bound of Corollary~\ref{cor:bound_on_superspace_rank}
  is much smaller.
The proposition below   generalizes \cite[Prop.~5.1]{LTrank} to the case of the  $Y$-rank of a subspace.
If $\dim Y \ge \dim A$, then the bound below is better than the bound of Corollary~\ref{cor:bound_on_superspace_rank}.

\begin{proposition}\label{prop:bound_on_superspace_rank_LT}
  Suppose $Y \subset \PP W$ is an irreducible subvariety not contained in a hyperplane.
  The maximum $X$-rank  of $p\in\PP(A \otimes W)$, where $X = Seg(\PP A \times Y)$
    is at most $\dim W - \dim Y + \dim A -1$
\end{proposition}
The proof of the proposition relies on Lemma~\ref{lem:bound_on_superspace_rank_for_disjoint},
  whose proof is essentially identical to the proof of \cite[Prop.~5.1]{LTrank}. Proofs are given in \S\ref{agproofs}.

Here is  an easy example  where the bounds of Corollary~\ref{cor:bound_on_superspace_rank}
    and Proposition~\ref{prop:bound_on_superspace_rank_LT} are optimal.

\begin{example}
  Let $Y=v_d(\pp 1)$ denote the projectivization of the set of symmetric tensors in $S^d\BC^2$ of rank one,
    and $X=Seg(\BP A\times v_d(\pp 1))$. Let $p=a_1\ot x^d + a_2\ot x^{d-1}y$.
  Then $\mathbf{R}_X(p)=\mathbf{R}_Y(p(A^*))=d+1$ and $\ur_X(p)=\ur_Y(p(A^*))=2$.
\end{example}
\begin{proof}
To see $\ur_Y(p(A^*))=2$, note
that
\[
  p(A^*)=\tlim_{t\ra 0}\langle x^d, (x+ty)^d\rangle.
\]
One has  $\mathbf{R}_Y(p(A^*)) \le d+1$,
 because  $\dim (S^d \CC^2 ) = d+1$.
The assertion $\mathbf{R}_Y(p(A^*)) \ge d+1$  follows from the standard arguments about
  points on the tangential variety to the rational normal curve, see, e.g.~\cite{LTrank}.
More precisely,
  suppose by contradiction $\mathbf{R}_Y(p(A^*)) = d$,
  so that there exists a hyperplane $H \subset \PP (S^d \CC^2)$
  containing $\PP(p(A^*))$ spanned (set-theoretically) by points in $Y\cap H$.
In particular, $Y\cap H$ has at least $d$ distinct points.
But the scheme $Y\cap H$ contains $[x]$ with  multiplicity at least $2$
  and thus the degree of scheme $Y \cap H$ is at least $d+1$.
This is impossible, since the degree of $Y$ is $d$.
\end{proof}

In the special case of tensors we have:
\begin{corollary}\label{tenrkbnd}
The maximum rank of an element of $A_1\otc A_n$, where
$
 \tdim A_1\leq \cdots \leq \tdim A_n
$,
is at most:
\[
 \begin{cases}
    \tdim A_1\cdots \tdim A_{n-1},  \quad  \text{ if } \tdim A_{n} > \tdim A_1 + \cdots  + \tdim A_{n-1},\\
    \tdim A_1\cdots \tdim A_{n-1} - \tdim A_1 - \cdots  - \tdim A_{n-1}  + \tdim A_{n} - 1, \quad  \text{ otherwise.}
 \end{cases}
\]
In particular, the maximum rank of an element of $(\BC^m)^{\ot n}$ is
at most $m^{n-1} - (n-2)m - 1$, compared with the maximum border rank, which is typically $\lfloor\frac{m^n}{(m-1)n+1}\rfloor\sim \frac{m^{n-1}}n $.
\end{corollary}

\begin{corollary}\label{cor:PGL_quotient}
  Let $A$, $W$ be   vector spaces with $\aaa:=\dim A \le \dim W$.
  Consider the rational map $\pi: \BP(A\otimes W) \dashrightarrow G(\aaa, W)$,
  with $\pi(p) := p(A^*)$ whenever $p(A^*)$ has maximal dimension.
  Then:
  \begin{enumerate}
    \item \label{item:pi_is_the_PGL_quotient}
          For any $T \in G(\aaa, W)$ the preimage $\pi\inv([T]) \subset \BP(A\otimes W)$
            is a $PGL(A)$ orbit isomorphic to $PGL(A)$.
    \item $\pi$ is $PGL(W)$ equivariant.
    \item \label{item:sigma_r_is_preimage}
         for $r\ge \dim A$ and any subvariety $Y \subset \BP W$,
         \[
           \sigma_r(Seg(\BP A \times Y)) =  \overline{\pi^{-1}(\sigma_{r,\aaa} (Y))}.
         \]
  \end{enumerate}
\end{corollary}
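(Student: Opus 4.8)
The plan is to treat the three claims in sequence, leaning heavily on Theorem~\ref{thm:can_calculate_rank_as_superspace_rank} for part~(iii) and on a direct orbit analysis for parts~(i) and~(ii). First I would set up the rational map $\pi$ carefully: on the open set $U \subset \BP(A\otimes W)$ where $p$, viewed as $p\colon A^* \to W$, is injective, $\pi(p) = p(A^*) \in G(\dim A, W)$ is a regular morphism, and $U$ is exactly the locus of tensors not lying in any $\BP(A'\otimes W)$ with $A' \subsetneq A$. For part~(i) I would show that two points $p, q \in U$ have the same image under $\pi$ iff they lie in the same $PGL(A)$-orbit: indeed $p(A^*) = q(A^*)$ forces $q = p \circ g^*$ for some $g \in GL(A^*)$, i.e.\ $q = (g^{-1})\cdot p$ under the natural action on the $A$-factor, and conversely the $PGL(A)$-action visibly preserves the image subspace. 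To conclude $\pi$ is \emph{the} quotient (a categorical/geometric quotient on $U$), I would note that $\pi$ is surjective onto $G(\dim A, W)$, its fibers are precisely the $PGL(A)$-orbits (each of dimension $\dim PGL(A)$, and the orbits are closed in $U$ since $GL(A)$ acts freely there up to scalars), and dimension count $\dim\BP(A\otimes W) - \dim PGL(A) = \dim A\cdot\dim W - 1 - (\dim A^2 - 1) = \dim A(\dim W - \dim A) = \dim G(\dim A, W)$ matches; together with normality of $G(\dim A, W)$ this identifies $\pi|_U$ with the quotient, and one extends over all of $\BP(A\otimes W)$ by taking closures as in the statement of $\sigma_r$.

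Part~(ii) is the easiest: for $g \in GL(W)$ the action on $A\otimes W$ through the second factor satisfies $(g\cdot p)(A^*) = g(p(A^*))$ by construction, since $g$ acts on $W$ and commutes with evaluation against $A^*$; this is just unwinding definitions and I would state it in one line.

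For part~(iii), the identity $\sigma_r(\BP A\times Y) = \overline{\pi^{-1}(\sigma_{r,\dim A}(Y))}$, the key input is Theorem~\ref{thm:can_calculate_rank_as_superspace_rank}, which gives $R_X([p]) = R_Y(p(A^*))$ for $X = Seg(\BP A\times Y)$, together with its border-rank analogue. On the open set $U$: for $p \in U$ we have $[p] \in \sigma_r(X)$ iff $\ur_X([p]) \le r$, and $\ur_Y(p(A^*)) \le r$ iff $p(A^*) \in \sigma_{r,\dim A}(Y)$ by Definition~\ref{subspacerk}; so $\ur_X([p]) \le r \iff \pi(p) \in \sigma_{r,\dim A}(Y)$, giving $\sigma_r(X) \cap U = \pi^{-1}(\sigma_{r,\dim A}(Y)) \cap U$ for $r \ge \dim A$. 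Since both $\sigma_r(X)$ and the closure of $\pi^{-1}(\sigma_{r,\dim A}(Y))$ are irreducible closed sets agreeing on the dense open $U$ — one must check $\sigma_r(X)$ is not contained in the complement of $U$, which holds because $\sigma_{\dim A}(X) \supseteq \overline{U}$ already (a general tensor of rank $\dim A$ has injective flattening and, for $r \ge \dim A$, $\sigma_r(X) \supseteq \sigma_{\dim A}(X)$) — taking closures yields equality.

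The main obstacle I anticipate is the careful handling of the locus where $\pi$ is undefined, i.e.\ tensors $p$ with $\dim p(A^*) < \dim A$: one must verify both that $\overline{U} = \BP(A\otimes W)$ (clear, as the non-injective locus is a proper determinantal subvariety) and that $\sigma_r(X)$ for $r \ge \dim A$ genuinely meets $U$, so that the closure in part~(iii) recovers all of $\sigma_r(X)$ rather than a proper subvariety — this is where the hypothesis $r \ge \dim A$ is essential and must be invoked explicitly. A secondary subtlety is confirming that $\pi|_U$ is a geometric quotient in whatever precise sense is intended (orbit space with the quotient topology and structure sheaf), for which I would cite that $PGL(A)$ acts freely on $U$ with closed orbits and $U$, $G(\dim A, W)$ are both smooth, so standard GIT or Luna-type results apply.
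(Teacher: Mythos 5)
The paper states this corollary without proof (it is presented as an immediate consequence of Theorem \ref{thm:can_calculate_rank_as_superspace_rank} and the definitions), so there is no argument of the authors' to compare against; your writeup correctly supplies the omitted details, and it follows exactly the route the paper's layout suggests. Part (ii) is definitional, your orbit analysis for (i) is right (fibers of $\pi$ over $U$ are the $PGL(A)$-orbits since two injective maps $A^*\to W$ with the same image differ by an element of $GL(A^*)$), and for (iii) the chain $[p]\in\sigma_r(X)\iff \ur_X([p])\le r\iff \ur_Y(p(A^*))\le r\iff \pi(p)\in\sigma_{r,\dim A}(Y)$ on $U$, followed by taking closures, is the intended argument; the hypothesis $r\ge\dim A$ enters precisely where you say, to guarantee $\sigma_r(X)\cap U\ne\emptyset$ so that no component is lost in the closure. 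Two small points: the phrase ``$\sigma_{\dim A}(X)\supseteq\overline{U}$'' is backwards --- what you need (and what your parenthetical actually argues) is $\sigma_{\dim A}(X)\cap U\ne\emptyset$, i.e.\ a general rank-$\dim A$ tensor has injective flattening; and this, like the statement itself, tacitly requires $Y$ to be irreducible and to span a subspace of dimension at least $\dim A$ (for degenerate $Y$ both $\sigma_{r,\dim A}(Y)$ and $\sigma_r(X)\cap U$ are empty and the claimed equality fails), a hypothesis the paper also leaves implicit.
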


\begin{remark}
  Although we will not use it in this article,
    we remark that, with notation as in Corollary~\ref{cor:PGL_quotient},
    the pair $(G(\aaa, W), \pi)$ is a geometric quotient
      (in the sense of \cite[Def.~0.6]{MR1304906})
    of the open subset $U \subset \BP(A\otimes W)$, where $\pi$ is well defined, by $PGL(A)$.
  Note $U$ is the set of those $p$ for which $p\colon A^* \to W$ is injective.
  Its complement is $\s_{\aaa-1}(Seg(\BP A\times \BP W))$.
\end{remark}

Often auxiliary varieties one constructs from a given variety $X\subset \BP V$ have an
{\it expected dimension}. That is, sometimes one can estimate the dimension of the auxiliary variety
from the dimension of $X$, such that for a generic variety of such a dimension, the estimate
gives the actual dimension.
If there is a difference between the expected dimension and the actual dimension,
  then we say the auxiliary variety is \emph{defective} and the difference is called the \emph{defect}.

For a variety $X\subset \BP V$, the expected dimension of $\s_{r,k}(X)$ is $\tmin\{rn+k(r-k), \tdim G(k,V)\}$.

The following bound appeared in \cite{ciliberto_cools_Grassmann_secant_extremal_varieties}.

\begin{theorem}\cite{ciliberto_cools_Grassmann_secant_extremal_varieties}\label{thm_estimate_on_dim_of_Gr_secants}
 Let $X^n\subset\pp N$ be an irreducible variety not contained in a hyperplane.
If $\s_{r,k}(X)\neq 
G(k,\BC^{N+1})$,
 then
$$
\tdim \s_{r,k}(X)\geq rn+k(r-k)-(n-1)(r-k).
$$
In particular $\tdim \s_{r,r}(X)=\tmin\{rn, r(N+1-r)\}$ for all irreducible varieties $X$, and when
$X$ is a curve, $\s_{r,k}(X)$ is always of the expected dimension.
\end{theorem}

We conclude with another situation  where  the dimensions of secant varieties to $Seg(\PP A \times X)$
  are possible to determine.

\begin{proposition}\label{cor:dimensions_for_secants_of_product}
  Suppose $r \le \dim A=\aaa $, $r < \dim W = \bw$ and $X\subset \BP W$ is of dimension $n$ and not contained in a  hyperplane.
  \begin{itemize}
   \item   If $\codim_{\BP W} X \ge r-1$,
           then $\sigma_r(Seg(\BP A \times X))$ is of the expected dimension $ r (\aaa + n )-1$
           and $\s_{r,\aaa}(X)$ is of the expected dimension $rn+\aaa(r-\aaa)$.
   \item   If $\codim_{\BP W} X < r-1$,
             then $\sigma_r(Seg(\BP A \times X)) = \sigma_r(Seg (\BP A \times \BP W))$ and
             thus 
             \[
                \dim \sigma_r(Seg(\BP A \times X)) = r( \aaa + \bw - r)-1.
             \]
           Unless $r=\aaa$,
             the secant variety $\sigma_r(Seg(\BP A \times X))$ is defective with defect $r(r - \bw + n - 2)$.
  \end{itemize}
\end{proposition}

Proposition \ref{cor:dimensions_for_secants_of_product} is proved in \S\ref{agproofs}.

\section{Situations where ranks and border ranks of points coincide}\label{coincidesect}

 Among (rational) homogeneous varieties $G/P\subset \BP V$, where $G$ is a complex semi-simple
Lie group and $V$ a $G$-module, there are certain special classes, for example the
{\it generalized cominuscule varieties}, which are the compact Hermitian symmetric spaces in their
homogeneous embeddings. Among these there is a   sub-class, called the
{\it sub-cominuscule varieties} which can be characterized by the fact that the only
$G$-orbits in $\BP V$ are the secant varieties of $G/P$.
There is a short list of such varieties:
 $Seg(\BP A\times\BP B)$,
$v_2(\pp n)$ (the rank one symmetric
$(n+1)\times (n+1)$ matrices), $G(2,n)$  (the rank two skew-symmetric
$n\times n$ matrices), the Cayley plane $\BO\pp 2$,
 and  the $10$-dimensional spinor variety  $\BS_5$,   see \cite{LMseries},  \cite{LMsel}.
Among these, in the case of the first three, the orbits are just the set of matrices
(resp. symmetric, resp. skew-symmetric) of a given rank.

\begin{proposition} \label{rurchss} Let  $X\subset \BP V$ be a sub-cominuscule
variety.   Then for all $p\in \BP V$, $\mathbf{R}_X(p)=\ur_X(p)$.
\end{proposition}
\begin{proof}
  Since $X$-rank and $X$-border rank are invariant under the automorphism group of $X \subset \PP V$
  and the only orbits are the secant varieties, the rank and the border rank must coincide.
\end{proof}

\section{Ranks of  points in $\BC^2\ot \BC^{\bbb}\ot \BC^{\ccc}$}\label{jajasect}

In this section we review and summarize   facts on tensors in $\CC^2 \otimes \CC^{\bbb} \otimes \CC^{\ccc}$,
which are scattered throughout  the literature.
The main advantage of this case   is the existence of Kronecker's normal form,
  which we review in \S\ref{sect_kronecker_normal_form}.
A theorem of Grigoriev, Ja'Ja' and Teichert calculates the rank of a tensor in the Kronecker  normal form.
We recall this theorem and its  proof in \S\ref{sect_thm_jaja}.
If in addition $\bbb \le 3$, then there is only a finite number of $GL_2 \times GL_{\bbb} \times GL_{\ccc}$-orbits in $\CC^2 \otimes \CC^{\bbb} \otimes \CC^{\ccc}$.
We present a description of these orbits, with a representative of each of them
   and   list the ranks of each of these tensors in \S\ref{section_orbits}.

Let $B \simeq \CC^{\bbb}$ and $C \simeq \CC^{\ccc}$.
Throughout this section by \emph{pencil of matrices}, or simply \emph{pencil},
   we mean a two dimensional linear subspace of $B \ot C$.
To denote a pencil,
  it is convenient to use matrix notation so choose bases of $B,C$
  and write the linear subspace as $sM+tN$, where $M,N\in B\ot C$ and $s,t\in \BC$.
Such an expression  is not unique, but each pair of linearly independent $M,N$ determines a pencil.

For a tensor $p \in \CC^2 \otimes B \otimes C$ set $M:=p(a^1)$ and $N:=p(a^2)$, where $a^1, a^2$
  is a basis of $(\CC^2)^*$.
By Corollary~\ref{cor:PGL_quotient}, two tensors $p, p' \in \CC^2 \otimes B \otimes C$
  are equivalent with respect to the action of $GL_2 \times GL(B) \times GL(C)$ if and only if
  the determined pencils or matrices are equivalent with respect to the action of $GL(B) \times GL(C)$.

\subsection{Kronecker's normal form}\label{sect_kronecker_normal_form}

Kronecker determined a normal form for pencils of matrices.
His classification works over arbitrary closed fields,
 but we only present the results over $\BC$.
The result is as follows (see, e.g., \cite[Chap. XII]{MR0107649}):

Define the $\ep\times (\ep+1)$ matrix
$$
L_{\ep}=L_{\ep}(s,t)=\begin{pmatrix}
s        & t & &   \\
  &\ddots    &\ddots &  \\
&         & s        & t
\end{pmatrix}.
$$
\begin{proposition}
Every pencil can be written as:
\be\label{kronnormalform}
sM+tN=
\begin{pmatrix}
L_{\ep_1} &         & & & & &\\
          &\ddots   & & & \\
 & &                &L_{\ep_k} & & & &\\
& & & &  L_{\eta_1}^T  & & &\\
& & & &   & \ddots & &\\
& & & &  & & L_{\eta_l}^T &\\
& & & &  & &  &s\Id_f +t F
\end{pmatrix},
\ene
where $F$ is an $f\times f$ matrix in
Jordan normal form (one can also use rational canonical form) and $T$ denotes the transpose.
\end{proposition}

The normal form above is not always unique.
Say $F$ has Jordan blocks, $F_{i,j}$ where $\l_i$ is the $i$-th eigenvalue.
If there is no block of the form $L_{\ep_i}$ or $L_{\eta_j}^T$,
  then we may assume at least one of the $\l_i$ is zero by changing basis in $\BC^2$.
In general, one can always change the bases in $\BC^2$, $B$ and $C$
  to obtain at least one of the $\l_i=0$.
We will not use this in general, so we omit the proof, but we illustrate a  non-trivial  case in
  the proof of Proposition~\ref{prop_orbits_234}. The  general case is not much different than the illustrated case.
If the blocks $F_{i,j}$ are such that there are no $1$'s above the diagonal, then
  we can also normalize one of the $\l_i=1$ by rescaling $t$.

\begin{example}
Suppose $f=3$. The possible Jordan normal forms of $3\times 3$ matrices are
$$
\begin{pmatrix} \l & & \\ &\mu& \\ & &\nu\end{pmatrix},\
\begin{pmatrix} \l & & \\ &\l& \\ & &\mu\end{pmatrix},\
\begin{pmatrix} \l &  1 & \\ &\l& \\ & &\mu\end{pmatrix},\
\begin{pmatrix} \l & & \\ &\l& \\ & &\l\end{pmatrix},\
\begin{pmatrix} \l &1 & \\ &\l& \\ & &\l\end{pmatrix},\
\begin{pmatrix} \l & 1 & \\ &\l&1 \\ & &\l\end{pmatrix}.
$$
Now suppose $\langle M,N  \rangle$ is a pencil of $3\times 3$ matrices in its Kronecker normal form,
which has no block of the form $L_{\ep_i}$ or $L_{\eta_j}^T$ (thus it only has the block $s\Id_f +t F$).
Then it can respectively be normalized to
\be\label{3b3forms}
\begin{pmatrix} 0 & & \\ &1& \\ & &-1\end{pmatrix},\
\begin{pmatrix} 0 & & \\ &0& \\ & &1\end{pmatrix},\
\begin{pmatrix} 0 &  1 & \\ &0& \\ & &1\end{pmatrix},\
\begin{pmatrix} 0 & & \\ &0& \\ & &0\end{pmatrix},\
\begin{pmatrix} 0 &1 & \\ &0& \\ & &0\end{pmatrix},\
\begin{pmatrix} 0 & 1 & \\ &0&1 \\ & &0\end{pmatrix}
\ene
\end{example}
Note that the fourth case is not a pencil.
The first case requires explanation --- we claim that all pencils of the form:
\[
  s \begin{pmatrix}  1 & & \\ &  1& \\ & &  1\end{pmatrix}
+ t \begin{pmatrix} \l & & \\ &\mu& \\ & &\nu\end{pmatrix}
\]
  where $\lambda, \mu, \nu$ are distinct, are equivalent. In particular, any such  is equivalent
to one where $\lambda =0, \mu =1, \nu =-1$.
To prove the claim, first get rid of $\lambda$  by replacing $s$ with $s_1:=s+\lambda t$:
\[
  s_1 \begin{pmatrix}  1 & & \\ &  1& \\ & &  1\end{pmatrix}
+ t \begin{pmatrix} 0 & & \\ &\mu_1& \\ & &\nu_1\end{pmatrix}.
\]
Note that $0$, $\mu_1$ and $\nu_1$ are still distinct.
Next we   replace $t$ with $t_2: = s_1 + t \mu_1$:
\[
  s_1 \begin{pmatrix}  1 & & \\ &  0& \\ & &\mu_2\end{pmatrix}
+ t_2 \begin{pmatrix} 0 & & \\ &1& \\ & &\nu_2\end{pmatrix}.
\]
where $\mu_2 = 1- \frac{\nu_1}{\mu_1}$ and $\nu_2 = - \frac{\nu_1}{\mu_1}$
 and $0$, $\mu_2$ and $\nu_2$ are distinct.
Then we transport the constants to the first 2 entries by setting
$s_3:= \frac{1}{\mu_2} s_1$ and $t_3:= \frac{1}{\nu_2} t_2$:
\[
  s_3 \begin{pmatrix}  \frac{1}{\mu_2} & & \\ &  0& \\ & &1\end{pmatrix}
+ t_3 \begin{pmatrix} 0 & & \\ &\frac{1}{\nu_2}& \\ & &1\end{pmatrix}.
\]
It only remains to change basis in $B$ by sending $b_1$ to $\frac{1}{\mu_2} b_1$ and $b_2$ to $\frac{1}{\nu_2} b_2$
to show the pencil is equivalent to:
\[
  s_3 \begin{pmatrix}  1 & & \\ &  0& \\ & &1\end{pmatrix}
+ t_3 \begin{pmatrix} 0 & & \\ &   1& \\ & &1\end{pmatrix}.
\]
Thus every two such pencils are equivalent.

If $F$ is $4\times 4$ it is no longer possible to normalize   all the constants  in the case
$$
\begin{pmatrix}
\l & 1& &  \\ &\l & & \\ & & \mu & 1 \\ & & & \mu
\end{pmatrix}.
$$
Essentially because of this, the only spaces of tensors
$\BC^{\aaa}\ot\BC^{\bbb}\ot \BC^{\ccc}$, $2\leq \aaa\leq\bbb\leq \ccc$, that   have a finite
number of $GL_{\aaa}\times GL_{\bbb}\times GL_{\ccc}$-orbits,
are $\BC^2\ot \BC^2\ot \BC^{\ccc}$   and $\BC^2\ot \BC^3\ot \BC^{\ccc}$ (see \cite{Kacorb,KWorbit}).
Moreover,  in these cases,
  any tensor lies in a $\BC^2\ot \BC^2\ot \BC^4$ in the first case and a $\BC^2\ot \BC^3\ot \BC^6$ in
the second for some linear subspace $\BC^4 \subset \BC^{\ccc}$ or $\BC^6 \subset \BC^{\ccc}$, respectively.

\subsection{Theorem of Grigoriev,   Ja'Ja', and Teichert}\label{sect_thm_jaja}

The Kronecker normal form is convenient for calculating the  $Y$-rank of a pencil of matrices,
  where  $Y:=Seg(\PP B \times \PP C)$.
It turns out that the contribution of each block to the $Y$-rank is separate,
  that is the rank of the pencil is a sum of the ranks of each blocks in \eqref{kronnormalform},
  see Proposition~\ref{jajaredlem}.
The rank of each block is obtained in Lemmas~\ref{companlem}, \ref{leplem}, \ref{jordbllem}.
Theorem \ref{jajathm} below summarizes these calculations.

For a fixed linear map $F: \BC^f\ra \BC^f$, let $d(\l)$ denote the number of Jordan blocks
of size at least two associated to the eigenvalue $\l$, and let $m(F)$ denote
the maximum of the $d(\l)$.

\begin{theorem}[Grigoriev,   Ja'Ja', Teichert] \cite{MR519843,MR521052,Teichert}\label{jajathm}
A pencil  of the form \eqref{kronnormalform} has rank
$$\sum_{i=1}^k (\ep_i+1)+\sum_{j=1}^l (\eta_j+1)+f+  m(F).
$$
In particular, the maximum possible  rank of a tensor in $\BC^2\ot \BC^{\bbb}\ot \BC^{\bbb}$ is
$\lfloor \frac{3\bbb}2\rfloor$.
\end{theorem}

For $\bbb =2n$, the maximum possible rank is obtained by a pencil of the form \eqref{equ:tensor_with_maximal_rank} below.
For $\bbb =2n+1$, take $s\Id_{2n+1} + t N$, where $N$ is as in \eqref{equ:tensor_with_maximal_rank},
   but viewed as a $(2n+1) \times (2n+1)$ matrix --- just add a row and a column of zeros.

\begin{remark}
In \cite{MR521052} Theorem \ref{jajathm} is stated as an inequality (Cor. 2.4.3 and Thm. 3.3), but
the results are valid over arbitrary closed fields. In \cite{MR519843} the results are stated,
but not proved, and the reader is referred to \cite{MR0453768} for indications towards the proofs. In
\cite{BCS} a complete proof is given of an equivalent statement in terms of the elementary divisors
of the pair,  and the text states the proof is taken from the
unpublished PhD thesis \cite{Teichert}.
\end{remark}

\begin{proposition}\label{jajaredlem}
Let $B=B_1\op B_2$, $C=C_1\op C_2$,
  $p_1\in A \ot B_1\ot C_1$, $p_2\in A \ot B_2\ot C_2$, and $p_3\in A \ot B_2\ot C_1$.
Suppose $\bbb_i$ and $\ccc_i$ are the dimensions of, respectively, $B_i, C_i$ for $i =1,2$.
Then
\begin{enumerate}
  \item \label{item:bound_on_rank_of_sum_p1_p2_p3}
     If $p_2: B_2^*\ra  A \ot C_2$ is injective, then $\mathbf{R}(p_1+p_2+p_3)\geq \mathbf{R}(p_1)+ \bbb_2$.
  \item \label{item:equality_for_rank_of_sum_p1_p2}
     If both maps $p_2: B_2^*\ra A \ot C_2$ and $p_2: C_2^*\ra A \ot B_2$ are injective
       and $\mathbf{R}(p_2)=\tmax\{ \bbb_2,\ccc_2\}$ (the minimum possible for such $p_2$), then
       $\mathbf{R}(p_1+p_2)= \mathbf{R}(p_1)+\mathbf{R}(p_2)$.
\end{enumerate}
\end{proposition}

\begin{proof}
  To prove \ref{item:bound_on_rank_of_sum_p1_p2_p3} let
  $p := p_1+ p_2 + p_3$, $r:=\mathbf{R}(p)$ and write $p = \sum_{i=1}^r a_i\ot b_i\ot c_i$ in some minimal presentation.
  Consider the projection $\rho \colon B \to  B_2$.
  Since $p|_{B_2^*}$ is injective,
    we may assume $\rho(b_1)\hd \rho(b_{\bbb_2})$ form a basis of $B_2$.
  Let $B_2' \subset B$ be the span of $b_1 \hd b_{\bbb_2}$.
  Note that the composition $B_1 \hookrightarrow B \to B /B_2'$ is an isomorphism.
  Consider the following composed projection $\pi$:
  \[
    A \otimes B \otimes C \to A \otimes (B/{B_2'})  \otimes C  \to A \otimes (B/{B_2'})  \otimes C_1 .
  \]
  The kernel of $\pi$ contains $A \otimes B\otimes C_2$,
    and  $\pi|_{A\otimes B_1 \otimes C_1}$ is an isomorphism.
  Thus $\pi(p)$ is $p_1$ (up to the isomorphism $B /B_2' \simeq B_1$)
    and also $\pi(p) = \sum_{i=\bbb_2 +1}^r \pi(a_i\ot b_i\ot c_i)$.
  Hence $\mathbf{R} (p_1) \le r - \bbb_2$ as claimed in \ref{item:bound_on_rank_of_sum_p1_p2_p3}.

  Statement~\ref{item:equality_for_rank_of_sum_p1_p2}
    follows from \ref{item:bound_on_rank_of_sum_p1_p2_p3} with $p_3=0$ used twice
    (once with the roles of $B$ and $C$ exchanged)
      to note $\mathbf{R}(p_1+p_2)\geq \mathbf{R}(p_1)+\bbb_2$ and $\mathbf{R}(p_1+p_2)\geq \mathbf{R}(p_1)+\ccc_2$,
    and the inequality $\mathbf{R}(p_1+p_2)\leq \mathbf{R}(p_1)+\mathbf{R}(p_2)$.
\end{proof}

Proposition  \ref{jajaredlem}  was stated  and proved
for the special case $\tdim A=2$ in  \cite[Lemma 19.6]{BCS}.
The lemma  is worth generalizing
because it provides    an example of a situation where the
{\it additivity} conjectured by Strassen \cite{MR0521168} holds.

  A generic $\bbb\times \bbb$ pencil is diagonalizable (as the conditions to have repeated eigenvalues or
bounded rank are closed conditions)  and thus of rank $\bbb$.
Thus for most (more precisely, a Zariski open subset of) pencils that are not diagonalizable, 
  a perturbation by a general rank one matrix will make it diagonalizable.
The next lemma shows that such a perturbation can be achieved 
  if the pencil is a Jordan pencil $\langle Id_{\bbb},F\rangle$ 
  with each eigenvalue
  associated  to just one Jordan block.

\begin{lemma}\label{companlem}
   Let $p = \langle \Id_{\bbb}, F \rangle$ with $F$ a size $\bbb$  matrix
     in Jordan normal form with no eigenvalue having more than one associated Jordan block.
   Then  $\mathbf{R}(p)=\bbb+1$ if the Jordan form is not diagonal,
      and $\mathbf{R}(p)=\bbb$ if the Jordan form is diagonal.
\end{lemma}
An equivalent lemma is proved in \cite[Prop.~19.8]{BCS}. 
We present a slightly different proof in Section~\ref{parsymsect},
 where we prove a more general Lemma~\ref{lem_rank_of_symmetric_pencil_with_distinct_eigenvalues}.

\begin{lemma}\label{leplem}
  Let $p_2\in \CC^2 \otimes \CC^{\epsilon} \otimes \CC^{\epsilon+1}$ be a tensor,
    whose Kronecker normal form is $L_{\ep}$.
  Then  $\mathbf{R}(p_2)=\ep+1$.
  In particular, Proposition \ref{jajaredlem}\ref{item:equality_for_rank_of_sum_p1_p2} applies for $p_2$.
  Analogous statements for tensor whose Kronecker normal form is 
  $L_{\eta}^T$
  are also true.
\end{lemma}
This lemma is proved in \cite[Prop.~19.9]{BCS}. 
We present a different approach, which allows us to present a form for all minimal decompositions $p_2$,
   see Remark~\ref{rem_any_decomposition_of_L_epsilon}.

\begin{proof}
  Consider $p_2: (\CC^{\epsilon+1})^* \to \CC^2 \otimes \CC^{\epsilon}$ 
    and define $E:= p_2\bigl((\CC^{\epsilon+1})^*\bigr) \subset \CC^2 \otimes \CC^{\epsilon}$.
  It is parametrized by
  \begin{equation}\label{equ_matrix_for_L_epsilon}
     \begin{pmatrix}
        \gamma_0 & \gamma_1 & \gamma_2 & \dots & \gamma_{\epsilon-2} & \gamma_{\epsilon-1}\\ 
        \gamma_1 & \gamma_2 & \gamma_3 & \dots & \gamma_{\epsilon-1} & \gamma_{\epsilon}
     \end{pmatrix}.
  \end{equation}
  In particular $\dim E = \epsilon+1$,
    so by Proposition~\ref{elemrank}\ref{item_general_point_in_sigma_r_r},
    we have $\mathbf{R}(p_2) = \epsilon+1$ if and only if $\PP E$
    is spanned by 
reduced points of $X:=Seg(\PP^1 \times \PP^{\epsilon-1})$.
  The intersection $Z:=X \cap \PP E$ is defined by 
    the $2\times 2$ minors of \eqref{equ_matrix_for_L_epsilon}. 
  These equations define a rational normal curve of degree $\epsilon$ in $\PP E$.
  In particular, $Z$ is reduced and its points span $\PP E$.
\end{proof}

\begin{remark}\label{rem_any_decomposition_of_L_epsilon}
   The proof above shows how to obtain any decomposition of $p_2$ as a sum of $\epsilon+1$
     simple tensors.
   Pick $\fromto{[x_0,y_0]}{[x_{\epsilon}, y_{\epsilon}]}$ to be pairwise distinct points on $\PP^1 \simeq Z$.
   Each of these points gives rise to a rank $1$ matrix (defined up to scale):
   \[
   \begin{pmatrix}
     {x_i}^{\epsilon} & {x_i}^{\epsilon-1}{y_i} & \dots & {x_i}{y_i}^{\epsilon-1}\\ 
     {x_i}^{\epsilon-1}{y_i} & {x_i}^{\epsilon-2}{y_i}^2 & \dots & {y_i}^{\epsilon}
   \end{pmatrix} =
   \begin{pmatrix}
      {x_i} \\ {y_i}
   \end{pmatrix}
   \begin{pmatrix}
      {x_i}^{\epsilon-1} & {x_i}^{\epsilon-2} y_i & \dots & {y_i}^{\epsilon-1}
   \end{pmatrix}.
   \]
   These matrices for $i \in \setfromto{0}{\epsilon}$ span $E$. 
   For example, if $\epsilon=3$, then the quadruple $[1,0]$, $[0,1]$, $[1,1]$, $[1,-1]$ gives a decomposition of 
   \[
      p_2 = a_1\otimes b_1\otimes c_1 + (a_1\otimes b_2+a_2\otimes b_1)\otimes c_2 
                + (a_1\otimes b_3+a_2\otimes b_2)\otimes c_3 + a_2\otimes b_3\otimes c_4
   \]
   into the following four simple summands:
\begin{align*}
   p_2 & = a_1\otimes b_1\otimes (c_1-c_3)\\ 
  &+ a_2\otimes b_3\otimes (c_4-c_2) \\
& + \frac 1 2 (a_1+a_2)\otimes (b_1+b_2+b_3)\otimes (c_2+c_3) \\
&  + \frac 1 2 (a_1-a_2)\otimes (b_1-b_2+b_3)\otimes (c_3-c_2).
\end{align*}
\end{remark}

\begin{lemma}[{\cite[Prop. 19.10]{BCS}}]\label{jordbllem}
  Let a pencil $E$ be given by $\langle \Id_{2n}, F \rangle$ with $F$ a matrix consisting
    of $n$ Jordan blocks of size two, all  with the same eigenvalue. Then
    $\mathbf{R}(E)=3n$.
\end{lemma}
\begin{proof}
  After possibly changing bases, we may write   $E  =\langle M, N \rangle$, where
  \begin{equation}\label{equ:tensor_with_maximal_rank}
     sM + tN = \begin{pmatrix}
                 s \Id_n  & t \Id_n \\
                       0  & s \Id_n
               \end{pmatrix}.
  \end{equation}
  Let $B = B_1 \oplus B_2$ and $C = C_1 \oplus C_2$ be the splitting corresponding to these blocks,
     with $B_1 = B_2 = {C_1}^* = {C_2}^* = \CC^n$
     and
  \[
     M =  \Id_{{C_1}^*,B_1} + \Id_{{C_2}^*, B_2} \in B_1 \otimes C_1 \oplus B_2 \otimes C_2, \quad
     N = \Id_{{C_2}^*, B_1} \in B_1 \otimes C_2.
  \]
  Here $\Id_{{C_i}^*, B_j}$ is a distinguished isomorphism ${C_i}^* = B_j$, whose matrix is the identity matrix.
  Let~$A \simeq \CC^2$.
  Consider the tensor in $A\otimes B \otimes C$ corresponding to $E$,
    that is $p = s \otimes M + t \otimes N$, where we think of $s,t$ as a basis of $A$.
  Suppose $\mathbf{R}(E)=\mathbf{R}(p) = r$.
  Clearly $r\le 3n$.
  Write a minimal decomposition $p = \sum_{i=1}^r a_i \otimes b_i \otimes c_i$.
  Let $\rho: B \to B_2$ be the projection with kernel $B_1$.
  Since the map $p\circ \rho^*: B_2^* \to A \otimes C$ is injective,
    we may choose a basis of $B_2$ out of the set $\setfromto{\rho(b_1)}{\rho(b_r)}$.
  Without loss of generality, suppose $\setfromto{\rho(b_1)}{\rho(b_n)}$ is a basis,
    and let $B_2'$ be the span of $\setfromto{b_1}{b_n}$.
  Consider the composition $\pi$:
  \[
    C^* \stackrel{p}{\to} A \otimes B \to A \otimes (B/B_2')
  \]
  where the second map is the natural projection.
  If $\pi$ is written as a tensor, then
  \[
    \pi = \sum_{i=n+1}^r a_i \otimes (b_i \mod B_2') \otimes c_i.
  \]
  Thus it suffices to prove that $R(\pi) = 2n$,
    which is equivalent to say that $\pi :C^* \to A \otimes (B/B_2')$ is injective.
  Suppose $\gamma \in C^*$ is in the kernel of  $\pi$.
  Then $p(\gamma) \in A \otimes B_2'$.
  Decompose $\gamma = \gamma_1 + \gamma_2$ with respect to $ C^* = {C_1}^* \oplus {C_2}^*$.
  Now
    \[
      p(\gamma) =  s \otimes (\Id_{{C_1}^*,B_1} + \Id_{{C_2}^*, B_2}) (\gamma) + t \otimes  \Id_{{C_2}^*, B_1} (\gamma)
                =  s \otimes \gamma^{B_1}_1 + s\otimes \gamma^{B_2}_2  + t \otimes \gamma^{B_1}_2
    \]
  where $\gamma^{B_j}_i := \Id_{{C_i}^*,B_j}(\gamma_i) \in B_j$.
  Since  $p(\gamma) \in A \otimes B_2'$ and $B_1 \cap B_2' =0$,
    we must have $t \otimes \gamma^{B_1}_2 = 0$, so $\gamma_2=0$.
  Therefore  $p(\gamma) = s \otimes \gamma^{B_1}_1 = 0$ for the same reason.
  Thus $\gamma =0$ and $\pi$ is injective as claimed.
\end{proof}

We are now ready to prove the theorem of Grigoriev, Ja'Ja', Teichert:
\begin{proof}[Proof of Theorem~\ref{jajathm}]
    Let $p$ be as in \eqref{kronnormalform}.
    First observe, that by Lemma~\ref{leplem} and Proposition~\ref{jajaredlem}\ref{item:equality_for_rank_of_sum_p1_p2}:
    \[
       \mathbf{R}(p) = \sum_{i=1}^k \mathbf{R}(L_{\epsilon_i}) + \sum_{j=1}^l \mathbf{R}(L^T_{\eta_j}) 
                       + \mathbf{R}(s \Id_f + t F)
                     = \sum_{i=1}^k (\epsilon_i + 1) + \sum_{j=1}^l (\eta_j+1) + \mathbf{R}(s \Id_f + t F)
    \]
    Thus it is sufficient to prove the theorem for $p = s \Id_f + t F$.

    Reordering the Jordan blocks 
       we can write $F = \begin{pmatrix}
                          F' & 0 \\ 
                          0  & D 
                      \end{pmatrix}$
       where $D$ is a diagonal matrix and $F'$ is a $f' \times f'$ matrix with only Jordan blocks of size at least two.
    The rank of  $(s \Id_{f-f'} + t D)$ is \mbox{$(f - f')$} by Lemma~\ref{companlem},
       thus we can apply Proposition~\ref{jajaredlem}\ref{item:equality_for_rank_of_sum_p1_p2} 
       and 
 \[
    \mathbf{R}(s \Id_f + t F) = \mathbf{R}(s \Id_{f'} + t F') + (f-f').
 \]
    Thus from now on assume $F$ has only Jordan blocks of size at least two, i.e., $F= F'$.
   
    The statement of Theorem claims  $\mathbf{R}(p) \le f+m(F)$, where $m(F) = \max_{\lambda \in \CC} (d(\lambda))$,
       and $d(\lambda)$ is the number of Jordan blocks of $F$ with eigenvalue $\lambda$.
    To obtain the upper bound $\mathbf{R}(p) \le f+m(F)$, 
       divide the Jordan blocks into $m(F)$ groups $\fromto{F_1}{F_{m(F)}}$, 
       with (after reordering the Jordan blocks) 
       $F = \begin{pmatrix}
                          F_1 &  \\ 
                            &  \ddots & \\
                            &         & F_{m(F)}
                      \end{pmatrix}$
      and such that in each $F_{\alpha}$  there is at most one block with given eigenvalue.
    Then $\mathbf{R}(p) \le \sum_{\alpha =1}^{m(F)} \mathbf{R}(F_{\alpha}) = f + m(F)$ 
      by Lemma~\ref{companlem}.
    
    To obtain the lower bound $\mathbf{R}(p) \ge f+m(F)$,
      let $\lambda$ be an eigenvalue of $F$ that appears in $m(F)$ Jordan blocks of $F$.
    Let $G_1$ be a matrix with $m(F)$ Jordan blocks of size two with eigenvalue $\lambda$.
    Then by reordering variables we can write 
                      $F = \begin{pmatrix}
                              G_1 & G_3 \\ 
                               0  & G_2 
                           \end{pmatrix}$  
    with $G_2$ a matrix in a Jordan form.
    Proposition~\ref{jajaredlem}\ref{item:bound_on_rank_of_sum_p1_p2_p3} applies for 
      $p = p_1 +p_2+p_3$, where $p_1= s\Id_{2m(F)}+ t G_1$, $p_2= s\Id_{f-2m(F)}+ t G_2$ 
      and $p_3 = t G_3$, and thus:
    \[
       \mathbf{R}(p) \ge \mathbf{R}(p_1) + (f-2m(F)) \stackrel{\text{by Lem.~\ref{jordbllem}}}{=} 3m(F)  + (f-2m(F)) = f+m(F).
    \]
\end{proof}

\section{Orbits}\label{section_orbits}

In this section, for spaces of tensors $A\ot B\ot C$ with a finite number of
$GL(A) \times GL(B) \times GL(C)$-orbits, we present the list of orbits with
their Kronecker normal form (which appeared in \cite{MR1830474}), geometric
descriptions of the orbit closures along with their dimensions, and the ranks and border ranks of the points in the
orbits. These geometric explanations are new to our knowledge.

\begin{remark}
We present orbits in  projective space,  whereas in \cite{MR1830474} they are presented in  affine space,
 so in \cite{MR1830474} there is   one more orbit in each space corresponding to $0$.
\end{remark}

We begin  with the case  $\dim A = \dim B = 2$ and $\dim C =\ccc$.
  Table~\ref{table_orbits_22c}   lists a representative
of each orbit  of the $GL(A) \times GL(B) \times GL(C)$-action  on $\BP( A\ot B\ot C)$,
  where $\dim A = \dim B = 2$ and $\dim C =\ccc$.
Here and in what follows
$$X=Seg(\BP A\times \BP B\times \BP C).
$$
\begin{table}[htb]
$$
\begin{array}{|r|c|c|ll|c|c|}
\hline
\hline
\#&\text{orbit closure}& \dim &\text{Kronecker normal form} & \text{pencil}  &  \ur &  \mathbf{R}  \\
\hline
\hline
1&X& {\ccc}+1& a_1\ot b_1\ot c_1
 & \left(\begin{smallmatrix} s \end{smallmatrix} \right)
&1&1
\\
\hline
2&Sub_{221} &{\ccc}+2&   a_1\ot b_1\ot c_1+a_2\ot b_2\ot c_1
 & \left(\begin{smallmatrix} s & t\end{smallmatrix} \right)
&2&2
\\
\hline
3&Sub_{122} &2{\ccc}&    a_1\ot b_1\ot c_1+ a_1\ot b_2\ot c_2
 & \left(\begin{smallmatrix} s & \\ & s \end{smallmatrix} \right)
&2&2
\\
\hline
4&Sub_{212} &2{\ccc}&   a_1\ot b_1\ot c_1+ a_2\ot b_1\ot c_2
 & \left(\begin{smallmatrix} s \\ t \end{smallmatrix} \right)
&2&2
\\
\hline
5&\tau(X) &2{\ccc}+2&   a_1\ot (b_1\ot c_1+    b_2\ot c_2)+ a_2\ot b_1\ot c_2
 & \left(\begin{smallmatrix} s&t\\ & s \end{smallmatrix} \right)&2&3
\\
\hline
6&\s_2(X)=Sub_{222} &2{\ccc}+3& a_1\ot b_1\ot c_1+ a_2\ot b_2\ot c_2
 & \left(\begin{smallmatrix}  s& \\ & t \end{smallmatrix} \right)
&2&2
\\
\hline
7&\xduals *  &3{\ccc}+1&a_1\ot (b_1\ot c_1+b_2\ot c_3) +  a_2\ot b_1\ot c_2
 & \left(\begin{smallmatrix}  s&t& \\ & & s \end{smallmatrix} \right)
&3&3
 \\
\hline
8&\s_3(X) &3{\ccc}+2&a_1\ot (b_1\ot c_1+   b_2\ot c_2)+ a_2\ot (b_1\ot c_2+b_2\ot c_3)
 & \left(\begin{smallmatrix} s&t& \\  &s& t \end{smallmatrix} \right)
&3&3
\\
\hline
9&\BP(A\ot B\ot C) &4{\ccc}-1&a_1\ot (b_1\ot c_1 + b_2\ot c_3 )
+a_2\ot (b_1\ot c_2+b_2\ot c_4)
& \left(\begin{smallmatrix} s&t&& \\ & &s& t \end{smallmatrix} \right)
   &4&4
\\
\hline
\hline
\end{array}
$$
\caption{\textbf{Orbits in $\BC^2\ot \BC^2\ot \BC^{\ccc}$.}
Each orbit is uniquely determined  by its closure, which is an algebraic variety listed in the second column.
The orbit itself is an open dense subset of this variety.
The dimension of the algebraic variety is in the third column.
The fourth column is the normal form of the underlying tensor,
   the distinct variables are assumed to be linearly independent.
The normal form is also given as a pencil, except the cases of $1$ and $3$, which are not pencils of matrices.
The border rank and rank are given in the next columns.
If ${\ccc}=3$ then $\s_3(X)=\BP( A\ot B\ot C)$, and   case $9$ does not occur.
}\label{table_orbits_22c}
\end{table}

The \emph{subspace variety} $Sub_{ijk}\subset \BP(A\ot B\ot C)$ is the set of tensors $[p]\in \BP (A\ot B\ot C)$ such that
there exists linear subspaces $A'\subset A$, $B'\subset B$, $C'\subset C$ respectively of dimensions
$i,j,k$ such that $p\in A'\ot B'\ot C'$. In other words,
  $Sub_{ijk}$ is the projectivization of the image of the vector bundle
$\cE:=\cS_{G(i,A)}\ot \cS_{G(j,B)}\ot \cS_{G(k,C)}\ra G(i,A)\times G(j,B)\times G(k,C)$
in $A\ot B\ot C$, where $\cS_{G(l,V)}\ra G(l,V)$ is the tautological vector bundle
whose fiber over the point $E$ is the linear space $E$.
Note that if $ k > ij:=l$, then $Sub_{ijk} = Sub_{ijl}$.
So we can always assume $k \le ij$ and similarly for permuted statements.
Then the parameterizing map $\cE \to A \ot B \ot C$ is birational onto its image,
  because for general $p \in Sub_{ijk}$ there is a unique  $A'\subset A$, $B'\subset B$, $C'\subset C$ respectively of dimensions $i,j,k$ such that $p \in A' \otimes B' \otimes C'$.
From this description one computes:
\[
  \dim Sub_{ijk} = \dim \PP (\cE) =  i (\aaa- i) + j (\bbb -j) + k(\ccc - k) + ijk -1.
\]
The other interpretations are as follows:
$\tau(X)$ is the \emph{tangential variety} to the
Segre variety,
$X_*\subset \BP (A^*\ot B^*\ot C^*)$ is the Segre variety in the
dual projective space,  and $ \xduals *\subset \BP (A\ot B\ot C)$ is its dual variety.

The point of $\tau(X)$ is tangent to the point $[a_1\ot b_2\ot c_1]$, the point of
$\xduals *$ contains the tangent plane to the $(\ccc - 3)$-parameter family of points
$[a_2^*\ot b_1^*\ot (s_2c_2^*+s_2c_4^*+s_2c_5^*\cdots + s_{\ccc}c_{\ccc}^*)]$,
 where $(a_j^*)$ is the dual basis to
$(a_j)$ of $A$ etc..
The dual variety $\xduals *$ is degenerate  (i.e., not a hypersurface) except when ${\ccc}\leq 3$,
see, e.g. \cite[p. 46 Cor. 5.10]{GKZ}.

To see the geometric explanations of the orbit closures: Cases 1,6,8 are   clearly on the
respective secant varieties. Cases 2,3,4 are all clearly on the respective subspace varieties, and
it is straightforward to check cases 5 and 7 are tangent to the points asserted. Finally to see
the orbit closures of these points are the stated geometric objects, one can compute the dimensions
of the Lie algebras of their stabilizers to determine the dimensions of their orbit closures and note that
they agree with the dimensions of the geometric objects.

Note $\s_3(X)=\s_3(Seg(\BP(A\ot B)\times \BP C))$ which causes it to
be degenerate with defect three.

The orbits $1$--$8$ are inherited from the ${\ccc}=3$ case,
  in the sense  that they are contained in  $Sub_{223}$.
Orbit $9$ is inherited from the ${\ccc}=4$ case.

\begin{proposition}
   If $[p] \in \PP(A\ot B\ot C)$, with $A \simeq \CC^2$,  $B \simeq \CC^2$, $C \simeq \CC^{\ccc}$,
     then $p$ is in precisely  one of the orbits $1$--$9$ from Table~\ref{table_orbits_22c}.
   The rank and border rank of $[p]$ are  as indicated in the table.
\end{proposition}
\begin{proof}
  Consider  $p: A^* \to B\otimes C$.
  If $\dim p(A^*) =1$, then let $e \in p(A^*)$ be a nonzero element.
  Since $\dim B =2$, the rank of $e$ is one or two, giving the cases $1$ and $3$, respectively.

  Otherwise, $\dim p(A^*) =2$
     and the Kronecker normal from \eqref{kronnormalform} gives   the following cases:
  \renewcommand{\theenumi}{\arabic{enumi}}
  \renewcommand{\labelenumi}{\theenumi.}
  \begin{enumerate}
   \addtocounter{enumi}{1}
   \item There is only one block of the form $L_1$.
   \addtocounter{enumi}{1}
   \item There is only one block of the form $L_1^T$.
   \addtocounter{enumi}{3}
   \item There is only one block of the form $L_2$.
   \item There are two blocks, both of the form $L_1$.
   \addtocounter{enumi}{-3}
   \item There is one block $L_1$ and $F$ is a $1\times 1$ matrix.
         The pencil is then
            $\left(\begin{smallmatrix}  s&t& \\ & & s + \lambda t \end{smallmatrix} \right)$
         and we can normalize   $\lambda$ to zero
         by changing coordinates: $s':= s+\lambda t$ and $c_1' = c_1 + \lambda c_2$.
    \renewcommand{\theenumi}{5--6}
   \item Otherwise, there is no block of the form $L_{\epsilon}$ or  $L_{\eta}^T$
           and $F$ is a $2\times 2$ matrix.
         We can normalize   one of the eigenvalues to $0$.
         We continue, depending on the Jordan normal form of $F$:
   \renewcommand{\theenumi}{\arabic{enumi}}
   \addtocounter{enumi}{-4}
   \item $F=\left(\begin{smallmatrix}  0 & 1 & \\ & 0 \end{smallmatrix} \right)$.
   \item $F=\left(\begin{smallmatrix}  0 &  & \\ & \lambda \end{smallmatrix} \right)$;
         Note that $\lambda \ne 0$, because $\dim p(A^*) =2$.
         Changing the coordinates $t':= \lambda t + s$
         we obtain the pencil $\left(\begin{smallmatrix}  s &  & \\ & t' \end{smallmatrix} \right)$.
  \end{enumerate}
  \renewcommand{\theenumi}{(\roman{enumi})}
  \renewcommand{\labelenumi}{theenumi}

   The ranks are calculated using Theorem~\ref{jajathm}.
   It remains to calculate $\ur(p)$.
The border rank in cases $1$--$4$ and $6$ follow  because
 $\ur(p) \le \mathbf{R}(p)$ and $\ur(p)=1$ if and only if $[p] \in X$.
   Case $5$ is clear too, as the tangential variety is contained in $\sigma_2(X)$.
   $\xduals *$ cannot be contained in $\sigma_2(X)$, as its dimension is larger,
   so for $p\in \xduals * $, we have $2 < \ur (p) \le  \mathbf{R}(p) =3$ proving  case $7$.
   Case $8$ is clear, and case $9$   follows from the dimension count.
\end{proof}

\begin{table}[htb]
$$
\begin{array}{|c|c|c|}
\hline
\#&\text{orbit closure}& \dim  \\
\hline
1&X=Seg(\BP A\times \BP B\times \BP C)& \ccc +2 \\
2& Sub_{221} &\ccc +4 \\
3& Sub_{212} & 2\ccc +1 \\
4& Sub_{122} & 2\ccc +2 \\
5&\tau(X)      & 2\ccc +4 \\
6&  Sub_{222}=\s_2(X) &2\ccc +5 \\
7&\segduals\subset Sub_{223} &3\ccc +3 \\
8&  Sub_{223} &3\ccc +4 \\
9& Sub_{224} & 4\ccc +1  \\
\hline
\end{array}
$$
\caption{The orbits listed in Table~\ref{table_orbits_22c},
          viewed as orbits in $\BC^2\ot \BC^3\ot \BC^{\ccc}$.
         Case $9$ does not occur for $\ccc=3$.}\label{table_orbits_inherited_22c}
\end{table}

Now  suppose  $\dim A =2$, $\dim B = 3$ and $\dim C =\ccc$.
The list of orbits for $\ccc=3$ with their Kronecker normal forms appears in \cite[Thm. 6]{MR1830474}.
First, we inherit all the orbits from the $\BC^2\ot \BC^2\ot \BC^{\ccc}$ case,
i.e., all the orbits from Table~\ref{table_orbits_22c}.
They become subvarieties of $Sub_{22\ccc}$ with the
same normal forms, pencils, ranks and border ranks
- the new dimensions are presented in Table~\ref{table_orbits_inherited_22c}.

In Table~\ref{table_orbits_inherited_22c}
  and below $\segduals \subset Sub_{ijk}$ denotes  the subvariety of $Sub_{ijk}$,
  obtained from the sub-fiber bundle of
  $\cS_{G(i,A)}\ot \cS_{G(j,B)}\ot \cS_{G(k,C)}$,
  whose fiber in  $A'\ot B'\ot C'$ (where  $\tdim A'=i$, $\tdim B'=j$, $\tdim C'=k$)
  is $\hat Seg(\BP A'^*\times \BP B'^*\times \BP C'^*)\dual\subset A'\ot B'\ot C'$.
In the special case $(i,j,k) = (\aaa,\bbb,\ccc)$, the variety $\segduals\subset Sub_{ijk}$ becomes $\xduals *$.

Table \ref{table_orbits_233} lists the  orbits in $\CC^2 \otimes \CC^3 \otimes \CC^\ccc$ that are contained in $Sub_{233}$,
  that is,  tensors in some $\CC^2 \otimes \CC^3 \otimes \CC^3$.

\begin{table}[htb]
$$
\begin{array}{|r|c|c|ll|c|c|}
\hline
\hline
\#&\text{orbit closure}&{\dim} &\text{Kronecker normal form} & \text{pencil} &   \ur &  \mathbf{R} \\
\hline
\hline
10& Sub_{133} &3\ccc & a_1\ot( b_1\ot c_1+    b_2\ot c_2+   b_3\ot c_3)
& \left(\begin{smallmatrix} s & & \\ & s & \\ & & s \end{smallmatrix} \right)
 &3&3\\
\hline
11& \segduals\subset Sub_{232} &2\ccc +6&
a_1\ot (b_1\ot c_1+ b_3\ot c_2) + a_2\ot b_2\ot c_1
& \left(\begin{smallmatrix} s & \\ t &   \\ &  s \end{smallmatrix} \right)
&3&3\\
\hline
12&  Sub_{232} &2\ccc +7& a_1\ot( b_1\ot c_1+    b_2\ot c_2 )+ a_2\ot(b_2\ot c_1+b_3\ot c_2)
& \left(\begin{smallmatrix} s & \\ t & s  \\ &  t \end{smallmatrix} \right)
 &3&3\\
\hline
13& & & a_1 \ot (b_1\ot c_1+b_2\ot c_3) + a_2\ot (b_1\ot c_2+b_3\ot c_3)
& \left(\begin{smallmatrix} s & t&\\   &  & s\\ &   & t\end{smallmatrix} \right)
& 3&4\\
\hline
14&  &  & a_1\ot( b_1\ot c_1+    b_2\ot c_2)+a_2\ot b_3\ot c_3
& \left(\begin{smallmatrix} s & &\\   & s & \\ &   & t\end{smallmatrix} \right)
&3 &3  \\
\hline
15&  &  & a_1\ot( b_1\ot c_1+    b_2\ot c_2+   b_3\ot c_3)+a_2\ot b_1\ot c_2
& \left(\begin{smallmatrix} s & t&\\   & s & \\ &   & s\end{smallmatrix} \right)
&3 & 4 \\
\hline
16&  &  &
\begin{matrix} a_1\ot( b_1\ot c_1+    b_2\ot c_2+   b_3\ot c_3)
\\
+a_2\ot( b_1\ot c_2+b_2\ot c_3)\end{matrix}
& \left(\begin{smallmatrix} s & t&\\   & s & t\\ &   & s\end{smallmatrix} \right)
&3 & 4 \\
\hline
17& \segduals\subset Sub_{233}  &3\ccc +7  &
           {a_1\ot( b_1\ot c_1+    b_2\ot c_2)+a_2\ot( b_1\ot c_2+b_3\ot c_3)} &
\left(\begin{smallmatrix} s & t&\\   & s & \\ &   & t\end{smallmatrix} \right)
 &3 &  4  \\
\hline
18& Sub_{233} &3\ccc +8&
           {a_1\ot (b_1\ot c_1+b_2\ot c_2)+ a_2\ot (b_2\ot c_2+  b_3\ot c_3)} &
\left(\begin{smallmatrix} s & &\\   & s+t & \\ &   & t\end{smallmatrix} \right)
  &3&3 \\
\hline
\hline
\end{array}
$$
\caption{\textbf{Orbits in   $\BC^2\ot \BC^3\ot \BC^{\ccc}$ contained in $Sub_{233}$}.
   Note that the  cases $11$ and $12$ are  are analogous to
orbits in  $\CC^2 \otimes \CC^2 \otimes \CC^{3}$.
   The unnamed orbits $13$--$16$ are various
   components of the singular locus of $\segduals\subset Sub_{233}$ (case $17$), see \cite{KWorbit} for descriptions.
} \label{table_orbits_233}
\end{table}

\begin{proposition}
  Table~\ref{table_orbits_233} lists   the orbits in $\PP(\CC^2 \ot \CC^3 \ot \CC^3)$,
  that are not contained in $Sub_{223}$.
\end{proposition}

\begin{proof}
  Let $[p] \in \PP(\CC^2 \ot \CC^3 \ot \CC^3)$, and let $p(A^*) = p((\CC^2)^*)$.
  If $\dim p(A^*)=1$, then $p$ must be as in $10$.
  Otherwise $\dim p(A^*)=2$ and the Kronecker normal form gives cases $11$--$13$,
    if there is at least one block of the form $L_{\epsilon}$ or $L_{\epsilon}^T$.
  Note that in the case $ \left(\begin{smallmatrix} s & \\ t &   \\ &  s+ \lambda t \end{smallmatrix} \right) $
    the eigenvalue may be set to zero  (as in case $7$) to  obtain case $11$.

 Now suppose we only have the block $s\Id_3 + tF$, for $F$ a $3\times 3$ matrix in its Jordan normal form.
  Then $F$ can be normalized to one of the six matrices in \eqref{3b3forms}.
  One of these matrices gives case $10$, while the remaining give cases $14$--$18$.

  Since $\sigma_3(Seg(\PP^1 \times \PP^2 \times \PP^2))$ fills out the ambient space (by
an easy application of Terracini's lemma or see \cite{MR2452824}),
    all the tensors listed in the table have border rank $3$.
  The ranks follow from Theorem \ref{jajathm}.
\end{proof}

We next consider   tensors contained in $Sub_{234}$ that are not
  contained in $Sub_{233}$.
These orbits are listed in Table~\ref{table_orbits_234}.

\begin{table}[htb]
$$
\begin{array}{|r|p{0.087\textwidth}|c|ll|c|c|}
\hline
\hline
\#&\text{orbit cl.}&{\dim} &\text{Kronecker normal form} & \text{pencil} &   \ur &  \mathbf{R} \\
\hline
\hline
19 & & &   a_1\ot (b_1\ot c_1+b_2\ot c_2+b_3\ot c_4)+ a_2\ot(b_1\ot c_2+b_2\ot c_3) &
\left(\begin{smallmatrix} s &t & & \\   & s & t & \\ & & &s\end{smallmatrix} \right)
 & 4 & 4
\\
\hline
20& & &   a_1\ot (b_1\ot c_1+b_2\ot c_3+b_3\ot c_4)+ a_2\ot b_1\ot c_2 &
\left(\begin{smallmatrix} s &t & & \\   &  & s & \\ & & &s\end{smallmatrix} \right)
 & 4 & 4
\\
\hline
21&  & &   a_1\ot (b_1\ot c_1+b_2\ot c_3+b_3\ot c_4)+ a_2\ot (b_1\ot c_2+ b_2\ot c_4) &
\left(\begin{smallmatrix} s &t & & \\   &  & s & t\\ & & &s\end{smallmatrix} \right)
& 4 & 5
\\
\hline
22& $\segduals\subset Sub_{234}$  & 4\ccc +6 &   a_1\ot (b_1\ot c_1+b_2\ot c_3)+ a_2\ot(b_1\ot c_2+b_3\ot c_4) &
\left(\begin{smallmatrix} s &t & & \\   &  & s & \\ & & &t\end{smallmatrix} \right)
& 4 & 4\\
\hline
23& $Sub_{234}$ & 4\ccc +7 &
\begin{matrix}a_1\ot (b_1\ot c_1+b_2\ot c_2+b_3\ot c_3)\\
+ a_2\ot(b_1\ot c_2+b_2\ot c_3+b_3\ot c_4)\end{matrix}
&\left(\begin{smallmatrix} s &t & & \\   & s & t & \\ & & s&t\end{smallmatrix} \right)
  &4&4 \\
\hline
\hline
\end{array}
$$
\caption{\textbf{Orbits in    $\BC^2\ot \BC^3\ot \BC^{\ccc}$ contained in $Sub_{234}$ but   not contained
 in $Sub_{233}$ or $Sub_{224}$}.
   The unlabeled orbit closures $19$--$21$
      are various components of the singular locus of $\segduals\subset Sub_{234}$, case $22$.}
   \label{table_orbits_234}
\end{table}

\begin{proposition}\label{prop_orbits_234}
  Table~\ref{table_orbits_234} lists  the orbits in $\PP(\CC^2 \ot \CC^3 \ot \CC^4)$
  that are not contained in $Sub_{233}$ or $Sub_{224}$.
\end{proposition}

\begin{proof}
  Let $[p] \in \PP(\CC^2 \ot \CC^3 \ot \CC^4)$, and let $p(A^*) = p((\CC^2)^*)$.
  If $\dim p(A^*)=1$, then $p$ must be  in $Sub_{233}$.
  Otherwise $\dim p(A^*)=2$ and by the Kronecker normal form,
    there must be at least one block of the form $L_{\epsilon}$
    (otherwise $p \in Sub_{233}$).
  Various configurations of the blocks give cases $19$-$23$.
  In all the cases the eigenvalues can be absorbed by a coordinate change,
    with perhaps the only non-trivial case
    $\left(\begin{smallmatrix} s &t & & \\   & s & t & \\ & & &s + \lambda t \end{smallmatrix} \right)$.
  In this case,  substitute $s' = s+ \lambda t$ to get
    $\left(\begin{smallmatrix} s'- \lambda t &t & & \\   & s'- \lambda t & t & \\ & & &s' \end{smallmatrix} \right)$.
  Then add $\lambda$ times the  third column to the second column to obtain
    $\left(\begin{smallmatrix} s'- \lambda t &t & & \\   & s' & t & \\ & & &s' \end{smallmatrix} \right)$.
  Add $\lambda$ times second column to the first column:
    $\left(\begin{smallmatrix} s'  &t & & \\  \lambda s' & s' & t & \\ & & &s' \end{smallmatrix} \right)$.
  Subtract  $\lambda$ times first row from the second row:
    $\left(\begin{smallmatrix} s'  &t & & \\   & s' - \lambda t & t & \\ & & &s' \end{smallmatrix} \right)$.
  Finally, add $\lambda$ times third column to the second column:
    $\left(\begin{smallmatrix} s'  &t & & \\   & s' & t & \\ & & &s' \end{smallmatrix} \right)$
    --- this is   case $19$.

  Since $\sigma_4(Seg(\PP^1 \times \PP^2 \times \PP^3))$ fills out the ambient space (by Terracini's lemma, or
see e.g. \cite{MR2452824}),
    all the tensors listed in the table have border rank $4$.
  The ranks follow from Theorem \ref{jajathm}.
\end{proof}

\begin{table}[htb]
$$
\begin{array}{|r|c|c|ll|c|c|}
\hline
\hline
\#&\text{orbit closure}&{\dim} &\text{Kronecker normal form}& \text{pencil} &   \ur &  \mathbf{R} \\
\hline
\hline
24& \xduals *  & 5\ccc +2 &
 a_1\ot (b_1\ot c_1+b_2\ot c_3+b_3\ot c_5) &
   \multirow{2}{*}{$\left(\begin{smallmatrix} s &t & & &\\   & & s & t &\\ & & & &  s\end{smallmatrix} \right)$}
 & 5& 5\\
&&&+a_2\ot(b_1\ot c_2+b_2\ot c_4)
&&&\\
\hline
25& Sub_{235}=\s_5(X) &5\ccc + 4&
 a_1\ot(b_1\ot c_1+b_2\ot c_2+b_3\ot c_4) &
   \multirow{2}{*}{$\left(\begin{smallmatrix} s &t & & &\\   & s & t & &\\ & & & s&t\end{smallmatrix} \right)$}
 &5&5 \\
&&&+a_2\ot(b_1\ot c_2+b_2\ot c_3+b_3\ot c_5)
  &&&\\
\hline
26 & \BP (A\ot B\ot C) &6\ccc -1&
 a_1\ot(b_1\ot c_1+b_2\ot c_3+b_3\ot c_5) &
\multirow{2}{*}{$\left(\begin{smallmatrix} s &t & & & &\\  & & s & t & &\\ & & & & s&t\end{smallmatrix} \right)$}
  &6&6 \\
&&&+a_2\ot(b_1\ot c_2+b_2\ot c_4+b_3\ot c_6)
& && \\
\hline
\hline
\end{array}
$$
\caption{\textbf{Orbits in $\BC^2\ot \BC^3\ot \BC^{\ccc}$  that are not contained in $Sub_{234}$}.
         When $\ccc=5$, $Sub_{235}=\BP (A\ot B\ot C)$ and case $26$ does not occur.}
   \label{table_orbits_23c}
\end{table}

Finally, we complete the list of orbits in $\BC^2\ot \BC^3\ot \BC^{\ccc}$.

\begin{proposition}
  Table~\ref{table_orbits_23c} lists the orbits in $\PP(\BC^2\ot \BC^3\ot \BC^{\ccc})$
    that are not contained in $Sub_{234}$.
\end{proposition}
\begin{proof}
  Since we need to fill a $3\times \ccc$ matrix with $\ccc \ge 5$, we need at least two blocks of the form $L_{\epsilon_i}$.
  Thus we either have two blocks $L_{1}$ and $F$ is a $1\times 1$ matrix (case $24$) or
       blocks $L_{1}$ and $L_2$ (case $25$), or three blocks $L_1$ (case $26$).
  The ranks follow from Theorem~\ref{jajathm}. The border rank is bounded from below by $5$ (cases $24$--$25$)
     and by $6$ in case $26$. It is also bounded from above by rank. This gives the result.
\end{proof}

\section{Application to $X=Seg(\pp 1\times v_2(\BP^{\bbb}))\subset \BP (A\ot S^2 \CC^{\bbb})$}\label{parsymsect}

This case is closely related to the determination of ranks of tensors with symmetric
matrix slices, an often-studied case in applications, see, e.g.,  \cite{MR2473189} and
the references therein.

P. Comon conjectured that the symmetric rank of a symmetric tensor is the same as its rank.
In \cite[\S4]{jabu_ginensky_landsberg_Eisenbuds_conjecture} Comon's conjecture
was generalized in several forms, in particular we asked if the analogous property
holds for   border rank
and for partially symmetric tensors. We show in the case at hand it does:

 Let $p\in \BC^{\aaa}\ot S^2\BC^{\bbb}$.
Write $\mathbf{R}_{\mathrm{ps}}(p)$ for the smallest $r$ such that $p$ is a sum of $r$ elements
of the form $a\ot b^2$, and $\ur_{\mathrm{ps}}(p)$ for the smallest $r$ such that it
is the limit of such.

\begin{theorem}\label{sympencilthm} Let $p\in \BC^2\ot S^2\BC^{\bbb}$.
Then $\mathbf{R}_{\mathrm{ps}}(p)=\mathbf{R} (p)$ and $\ur_{\mathrm{ps}}(p)=\ur(p)$.
  In particular, the maximum partially symmetric rank of an element of $\BC^2\ot S^2\BC^{\bbb}$
is $\lfloor \frac{3\bbb}2\rfloor$ (and the maximum partially symmetric border rank
had been known to be $\bbb$).
\end{theorem}

Note we always have $\mathbf{R}_{\mathrm{ps}}(p) \ge \mathbf{R} (p)$. 

The classification of pencils of quadrics is known, see  \cite[vol. 2, XII.6]{MR0107649}.
Every pencil of quadrics $p\in \BC^2\ot S^2\BC^{\bbb}$ is isomorphic to one built from blocks:
\begin{equation}\label{equ_Weierstass_normal_form}
  \begin{pmatrix}
      L^{sym}_{\epsilon_1} \\
                           &\ddots \\
                           &       & L^{sym}_{\epsilon_k} \\
                           &       &                      & G_{\lambda_1, \eta_1} \\
                           &       &                      &                      & \ddots \\
                           &       &                      &                      &        & G_{\lambda_l, \eta_l}\\
  \end{pmatrix},
\end{equation} 
with each block of the form either:
\[
  L^{sym}_{\epsilon}: = \begin{pmatrix}
                          0  &L_{\epsilon} \\
                          L^{T}_{\epsilon} &0 \\
                        \end{pmatrix}
\]
 where $L_{\epsilon}$ is as in \S\ref{sect_kronecker_normal_form}  (so that $L^{sym}_{\epsilon}$ is a $(2\epsilon+1)\times (2\epsilon +1)$-block) 
or
\[
  G_{\lambda, \eta} = \begin{pmatrix}
       0  & 0 & 0 &\dotsc & 0 &  t & s+ \lambda t \\
       0  & 0 & 0 &\dotsc & t & s+ \lambda t &  0 \\
       \vdots \\
       0  &  t & s+ \lambda t & \dotsc & 0 & 0 & 0\\
       t  &  s+ \lambda t & 0 & \dotsc & 0 & 0 & 0\\
       s+ \lambda t & 0   & 0 & \dotsc & 0 & 0 & 0
      \end{pmatrix}.
\]
The $\eta \times \eta$ blocks of the from $G_{\lambda, \eta}$ are analogous to the Jordan blocks 
  in the pencil $s\Id_f + t F$,
  but written in the other direction to maintain symmetry.
(The key point is that two pencils of
complex symmetric matrices are equivalent as symmetric pencils if and only if they are equivalent as
pencils of matrices, see \cite[vol. 2, XII.6, Thm. 6]{MR0107649}.)

We say that a square matrix $Q^i$ is a \emph{Hankel matrix}, if it is of the form
   $\begin{pmatrix}
      q^i_{1} & q^i_{2} & q^i_{3} & \dotsc \\
      q^i_{2} & q^i_{3} & \dotsc\\
      q^i_{3} & \vdots  & \ddots\\
      \vdots
   \end{pmatrix}$,
   that is an entry in $j$-th row and $k$-th column is equal to $q^i_{j+k}$.
   See \cite[p.~1]{partington_Hankel_operators} for more about Hankel matrices.

\begin{lemma}\label{lemma_no_common_divisors}
   Suppose $Q = 
   \begin{pmatrix}
      Q^1 \\
           & \ddots \\
                    && Q^l  
   \end{pmatrix}$
   is a symmetric $\bbb\times \bbb$ matrix with blocks $Q^i$, and
   whose entries are homogeneous polynomials of fixed degree $d$ in two variables $s,t$.
   Suppose there is no common divisor of all entries of $Q$ 
     and that each $Q^i$ is a Hankel matrix.
   Fix any finite set of complex numbers $\Lambda \subset \CC$.
   If $\mathbf{u} \in \CC^{\bbb}$ is a general vector, 
     then the polynomial $\mathbf{u}^T q \mathbf{u}$
     has distinct linear factors, none of which 
     equal to $s + \lambda t$ for any $\lambda \in \Lambda$.
\end{lemma}

\begin{proof}
   Due to the special form of the blocks $Q^i$, a general linear combination of all $q^i_{j}$
     can be obtained as $\mathbf{u}^T q \mathbf{u}$ for some $\mathbf{u}$.
   Since $q^i_{j}$ have no common divisor,
     the general linear combination has the required properties.
\end{proof}

The following Lemma is a symmetric version of Lemma~\ref{companlem}. 

\begin{lemma}\label{lem_rank_of_symmetric_pencil_with_distinct_eigenvalues}
   Let $p\in \BC^2\ot S^2\BC^{\bbb}$ be a pencil of quadrics consisting of the blocks:
   \[
     p = \begin{pmatrix}
             G_{\lambda_1, \eta_1} \\ 
                                 & \ddots  \\
                                 &         & G_{\lambda_l, \eta_l}   
           \end{pmatrix}
   \]
   with pairwise distinct $\lambda_i$ and $\eta_1 + \dotsb + \eta_l = \bbb$.
   \begin{itemize}
    \item  If all $\eta_i=1$ (i.e., $p$ is diagonal),
             then $\mathbf{R}_{\mathrm{ps}}(p) = \bbb$.
    \item If at least one $\eta_i \ge 2$, 
             then $\mathbf{R}_{\mathrm{ps}}(p) = \bbb +1$.
   \end{itemize}
\end{lemma}

\begin{proof}[Proof of Lemmas~\ref{companlem} and \ref{lem_rank_of_symmetric_pencil_with_distinct_eigenvalues}]
   In both Lemmas $p$ is the same pencil in the appropriate choice of coordinates.
   Thus consider $p$ as in Lemma~\ref{lem_rank_of_symmetric_pencil_with_distinct_eigenvalues}.
   If $p$ is diagonal, then clearly $\mathbf{R}_{\mathrm{ps}}(p) = \mathbf{R}(p) = \bbb$.
   Otherwise $\mathbf{R}_{\mathrm{ps}}(p) \ge  \mathbf{R}(p) \ge \bbb+1$.
   Thus it is sufficient to prove  $\mathbf{R}_{\mathrm{ps}}(p) \le \bbb+1$.

   Consider the classical  identity, where
      $\mathbf{B}$ is an $\bbb\times \bbb$ matrix, and $\mathbf{u},\mathbf{v}\in \BC^{\bbb}$:
   \[
     \det(\mathbf{B}+\mathbf{uv}^T) = (1 + \mathbf{v}^T\mathbf{B}^{-1}\mathbf{u})\,\det(\mathbf{B}).
   \]
   We will use this identity in the symmetric form
   \be\label{detlem}
      \det(\mathbf{B}+\mathbf{uu}^T) = \det(\mathbf{B}) + \mathbf{u}^Tcof\mathbf{(B)}  \mathbf{u}
   \ene
   where $cof\mathbf{(B)}$ denotes the cofactor matrix of $\mathbf{B}$.  We have
   \[
      cof(G_{\lambda_i, \eta_i}) = 
      -\begin{pmatrix}
       0  & 0 &\dotsc &  0 & (-s- \lambda_i t)^{\eta_i -1} \\
       0  & 0 &\dotsc & (-s - \lambda_i t)^{\eta_i -1} & (-s- \lambda_i t)^{\eta_i -2} t \\
       &&\vdots & \vdots & \vdots \\
       0  &  (-s -\lambda_i t)^{\eta_i -1} & \dotsc & (-s- \lambda_i t)^{2} t^{\eta_i -3}  & (-s- \lambda_i t) t^{\eta_i -2} \\
       (-s- \lambda_i t)^{\eta_i -1} &  (-s- \lambda_i t)^{\eta_i -2} t   & \dotsc & (-s- \lambda_i t) t^{\eta_i -2} & t^{\eta_i-1}
      \end{pmatrix},
   \]
   and $cof(p)$ has a blocks $\bigl(\prod_{j \ne i}-(-s - \lambda_j t)^{\eta_j}\bigr) cof(G_{\lambda_i, \eta_i})$
      centered around diagonal. For example, if $\bbb=5$, $\eta_1 =2$, and $\eta_2 =3$, then:
   \[
     p = 
      \begin{pmatrix}
         t              & \lambda_1 t + s  &    0            &    0            &  0              \\ 
        \lambda_1 t + s &  0               &    0            &    0            &  0              \\
         0              &  0               &    0            &    t            & \lambda_2 t + s \\
         0              &  0               &    t            & \lambda_2 t + s &  0              \\
         0              &  0               & \lambda_2 t + s &    0            &  0 
      \end{pmatrix}, \text{ and}
   \]
   \[
     cof(p) = 
      \left(\begin{smallmatrix}
         0              & (\lambda_1 t + s)(\lambda_2 t + s)^3  &    0            &    0            &  0              \\ 
        (\lambda_1 t + s)(\lambda_2 t + s)^3 &  -(\lambda_2 t + s)^3 t &    0            &    0            &  0              \\
         0              &  0               &    0            &    0           & (\lambda_1 t + s)^2(\lambda_2 t + s)^2 \\
         0              &  0               &    0            & (\lambda_1 t + s)^2(\lambda_2 t + s)^2 &  -(\lambda_1 t + s)^2(\lambda_2 t + s) t              \\
         0              &  0               & (\lambda_1 t + s)^2(\lambda_2 t + s)^2 &    -(\lambda_1 t + s)^2(\lambda_2 t + s) t            &  (\lambda_1 t + s)^2 t^2
      \end{smallmatrix}\right)
   \]
Since $\lambda_i$ are pairwise distinct, there is no common divisor of the entries of $cof(p)$ 
  viewed as homogeneous polynomials in $s$ and $t$.
Moreover $Q=cof(p)$ has the special form of Lemma~\ref{lemma_no_common_divisors}.
Thus $\mathbf{u}^Tcof(p)\mathbf{u}$ for general $\mathbf{u} \in \CC^{\bbb}$ 
   will have distinct factors, none of which are among the $(\lambda_j t + s)$.
Similarly, by just rescaling $\mathbf{u}$ if necessary, 
the polynomial
\[
    \det(p+t\mathbf{uu}^T) \stackrel{\text{by \eqref{detlem}}}{=} \det(p) + t \mathbf{u}^Tcof(p)  \mathbf{u} =
     (\lambda_1 t + s)\dotsm (\lambda_k t + s) + t \mathbf{u}^Tcof(p)  \mathbf{u}
\]
will be a polynomial with distinct linear factors.
Thus the perturbed pencil $p+t\mathbf{uu}^T$ is diagonalizable and
\[
  \mathbf{R}_{\mathrm{ps}}(p) \le  \mathbf{R}_{\mathrm{ps}}(p+t\mathbf{uu}^T) + \mathbf{R}_{\mathrm{ps}} (t\mathbf{uu}^T) \le \bbb+1.
\]
\end{proof}

Now we prove the analogue of Lemma~\ref{leplem}:
\begin{lemma}\label{lem_rank_of_L_sym}
   $\mathbf{R}_{\mathrm{ps}}(L^{sym}_{\epsilon}) = \mathbf{R}(L^{sym}_{\epsilon}) = 2\epsilon +2 $
\end{lemma}
\begin{proof}
   By Theorem~\ref{jajathm} we have $\mathbf{R}_{\mathrm{ps}}(L^{sym}_{\epsilon})   \ge \mathbf{R}(L^{sym}_{\epsilon}) =  2\epsilon +2 $, 
     thus it suffices to prove $\mathbf{R}_{\mathrm{ps}}(L^{sym}_{\epsilon}) \le 2\epsilon +2$.
   By Lemma~\ref{leplem} there exist $(\epsilon +1)$ rank $1$ 
     matrices $\fromto{M_0}{M_{\epsilon}}$
     such that $L_{\epsilon}(s,t) \subset \langle\fromto{M_0}{M_{\epsilon}}\rangle$ for all $s,t$.
   Write $N_i = \begin{pmatrix}
                  0 &  M_i \\ M_i^T & 0
                \end{pmatrix}$.
   Then $N_i$ is a symmetric matrix of rank $2$ and 
      $L_{\epsilon}^{sym}(s,t) \subset \langle\fromto{N_0}{N_{\epsilon}}\rangle$ for all $s,t$.
   Thus by Theorem~\ref{thm:can_calculate_rank_as_superspace_rank} 
      we have $\mathbf{R}_{\mathrm{ps}}(L^{sym}_{\epsilon})  \le \mathbf{R}(N_0) + \dotsb + \mathbf{R}(N_{\epsilon})  = 2(\epsilon +1)$.
\end{proof}

\begin{proof}[Proof of Theorem~\ref{sympencilthm}]
Write $p$ in the normal form of \eqref{equ_Weierstass_normal_form}.
By Theorem~\ref{jajathm}, 
\[
   \mathbf{R}_{\mathrm{ps}}(p) \ge \mathbf{R}(p) = \sum_{i=1}^k 2(\epsilon_i +1) + \sum_{j=1}^l \eta_j + \max_{\lambda \in \CC} \bigl(d(\lambda) \bigr),
\]
where $d(\lambda) = \# \set{ j \in \setfromto{1}{l} \mid \lambda_j = \lambda \text{ and } \eta_j \ge 2}$.
Let $G:= \begin{pmatrix}
             G_{\lambda_1, \eta_1} \\ 
                                 & \ddots  \\
                                 &         & G_{\lambda_l, \eta_l}   
   \end{pmatrix}$ and note:
\[
   \mathbf{R}_{\mathrm{ps}}(p) \le \sum_{i=1}^k \mathbf{R}_{\mathrm{ps}}(L^{sym}_{\epsilon_i}) 
                      + \mathbf{R}_{\mathrm{ps}}(G).
\]
By Lemma~\ref{lem_rank_of_L_sym} we have 
   $\mathbf{R}_{\mathrm{ps}}(L^{sym}_{\epsilon_i}) = 2(\epsilon_i + 1)$.
To estimate $\mathbf{R}_{\mathrm{ps}}(G)$,
   first chop off the diagonal blocks, i.e., pick $I:=\set{ i \in \setfromto{1}{l} \mid \eta_i  \ge 2}$
   and reorder blocks so that $G = \begin{pmatrix}
                     G_I &  \\  & D
                  \end{pmatrix}$
   where $G_I$ consists of the blocks $G_{\lambda_i, \eta_i}$ for $i \in I$, while $D$ consists of the remaining blocks
   (so $D$ is a diagonal pencil).
Now decompose $I= J_1 \sqcup  \dotsb \sqcup J_{\mu}$, where each $J_{\alpha}$ has the property: 
   if $i, j \in J_{\alpha}$, $i \ne j$, then $\lambda_i \ne \lambda_j$.
Thus each $G_{J_{\alpha}}$ has blocks with distinct eigenvalues,
   and by Lemma~\ref{lem_rank_of_symmetric_pencil_with_distinct_eigenvalues} we have  
   $\mathbf{R}_{\mathrm{ps}}(G_{J_{\alpha}}) = \sum_{j \in J_{\alpha}} \eta_j +1$
    and $\mathbf{R}_{\mathrm{ps}}(D)$ is the number of rows (or columns) of $D$.
Thus for each such decomposition   $I= J_1 \sqcup  \dotsb \sqcup J_{\mu}$:
\[
   \mathbf{R}_{\mathrm{ps}}(p) \le \sum_{i=1}^k  2(\epsilon_i + 1)
                      + \sum_{\alpha =1 }^m \mathbf{R}_{\mathrm{ps}}(G_{J_{\alpha}})  +  \mathbf{R}_{\mathrm{ps}}(D) =
   \sum_{i=1}^k  2(\epsilon_i + 1) + \sum_{j=1}^l \eta_j + \mu.
\]
It remains to pick $\mu$ as small as possible, which is $\mu = \max_{\lambda \in \CC} \bigl(d(\lambda) \bigr)$.

Explicitly, to obtain the upper bound $\mathbf{R}_{\mathrm{ps}}(p) = \lfloor \frac {3\bbb} 2\rfloor$,
   take a tensor consisting of $\lfloor \frac \bbb 2\rfloor$ blocks
   $G_{\lambda, 2 }$, all with the same eigenvalue $\l$, and if $\bbb$ is odd, add one $1 \times 1$ block $G_{\lambda', 1}$,
for any eigenvalue $\lambda'$. For instance, if $\lambda= \lambda'=0$, after reordering coordinates take:
\[
   \begin{pmatrix}
      t \Id_{\lfloor \frac \bbb 2\rfloor} & s \Id_{\lfloor \frac \bbb 2\rfloor}\\
      s \Id_{\lfloor \frac \bbb 2\rfloor} &  0 
   \end{pmatrix} 
    \text{ or }
   \begin{pmatrix}
      t \Id_{\lfloor \frac \bbb 2\rfloor} & s \Id_{\lfloor \frac \bbb 2\rfloor} & 0\\
      s \Id_{\lfloor \frac \bbb 2\rfloor} &  0                                  & 0\\
      0                                   &  0                                  & s
   \end{pmatrix}.
\]
\end{proof}

\section{Proofs of results in \S\ref{genlfactssect}}\label{agproofs}
The following  Lemma may be 
of interest in its own right, and the proof is a standard argument which is  \lq\lq well known to   experts.\rq\rq

\begin{lemma}\label{lem:hyperplane_section_is_nondegenerate}
   Let $Y \subset \PP W$ be a connected subvariety, which is not contained in any hyperplane in $\PP W$.
   Let $H \subset W$ be a hyperplane, which does not contain any irreducible component of $Y$ 
     (for example, $Y$ is irreducible).
   Then the scheme $Z:=Y \cap \PP H$ is not contained in any hyperplane in $\PP H$.
\end{lemma}

\begin{proof}
    Let $\ccI_{Y \subset \PP W}$ be the ideal sheaf of $Y$ in $\PP W$, and similarly for $\ccI_{Z \subset \PP H}$.
    The standard ring-ideal exact sequence
    $
       0 \to \ccI_{Y \subset \PP W} \to \ccO_{\PP W} \to \ccO_Y \to 0
    $
    leads to the cohomology long exact sequence:
    \[
       0 \to \underbrace{H^0(\ccI_{Y \subset \PP W})}_{= 0}
         \to \underbrace{H^0(\ccO_{\PP W})}_{\simeq \CC}
         \to \underbrace{H^0(\ccO_Y) }_{\simeq \CC, \text{ because Y is connected}}
         \to H^1(\ccI_{Y \subset \PP W})
         \to \underbrace{H^1(\ccO_{\PP W})}_{=0}.
    \]
    Thus  $h^1(\ccI_{Y \subset \PP W})=0$.

    Consider $h \in H^0(\ccO_{\PP W}(1))$ the defining equation of $H$. 
    By our assumptions, $h \tmod \ccI_{Y}$ is not a zero divisor, so we have a short exact sequence:
    \[
       0 \to  \ccI_{Y \subset \PP W} \stackrel{\cdot h}{\to} \ccI_{Y \subset \PP W}(1) \to \ccI_{Z \subset \PP H}(1) \to 0.
    \]
    Since $Y$ is not contained in any hyperplane, $h^0(\ccI_{Y \subset \PP W}(1)) =0$.
    Thus also  $h^0(\ccI_{Z \subset \PP H}(1)) = 0$,
       and $Z$ is not contained in any hyperplane in $\PP H$.
\end{proof}

\begin{lemma} \label{lem:bound_on_superspace_rank_for_disjoint}
Let $Y \subset \PP W$ be an irreducible subvariety not contained in a hyperplane,
   and let $E \subset W$ be a linear subspace disjoint from $Y$.
Then $\mathbf{R}_Y (E) \le \dim W - \dim Y$.
\end{lemma}
\begin{proof}
   Consider $H \subset W$, a general hyperplane containing $E$.
   By Bertini's Theorem $Z:=Y \cap H$ is reduced, see Part 3) of \cite[Thm. I.6.3]{jouanolou_Bertini}.
   By Part 1b) of the same Theorem, all components of $Z$ have dimension $\dim Y -1$.
   Furthermore, by Lemma~\ref{lem:hyperplane_section_is_nondegenerate} the points of $Z$ span $\PP H$.
   We will argue by induction on the dimension of $Y$.

   If $\dim Y =1$, then $Z$ is a finite collection of points spanning $\PP H$.
   We obtain $\mathbf{R}_Y(E) \le \mathbf{R}_Z(E) \le \dim H = \dim W - \dim Y$.

   If $\dim Y \ge 2$, then $Z$ is irreducible, see Part 4) of \cite[Thm. I.6.3]{jouanolou_Bertini}.
   By our induction hypothesis,  $\mathbf{R}_Y(E) \le \mathbf{R}_Z(E) \le \dim H - \dim Z =  \dim W - \dim Y$.
\end{proof}

\begin{proof}[Proof of Proposition~\ref{prop:bound_on_superspace_rank_LT}]
    Let $p \in A\otimes W$.
    By Theorem~\ref{thm:can_calculate_rank_as_superspace_rank} we have $\mathbf{R}_{X}(p) = \mathbf{R}_Y(p(A^*))$.
    Let $E' \subset p(A^*)$ be the linear space such that $\PP E'$ is the span of reduced points on $Y \cap p(A^*)$.
    Thus $\mathbf{R}_Y(E') = \dim E'$.
    If $E'=p(A^*)$, then $\mathbf{R}_Y(p(A^*)) = \dim E' \le \dim A$ and the claim holds, since $ \dim Y \le \dim W  -1$.
    Otherwise  $\dim E' \le \dim A -1$
       and choose a complement $E$ such that $p(A^*) = E' \oplus E$.
    Then $\mathbf{R}_Y(p(A^*)) \le \mathbf{R}_Y(E') + \mathbf{R}_Y(E) \le \dim A -1 + \mathbf{R}_Y(E)$
       and $\PP E$ is disjoint from $Y$.
    To conclude,   apply Lemma \ref{lem:bound_on_superspace_rank_for_disjoint}.
\end{proof}

\begin{proof}[Proof of Proposition~\ref{cor:dimensions_for_secants_of_product}]
  First assume $r = \dim A$.
  Then by Corollary \ref{cor:PGL_quotient}\ref{item:pi_is_the_PGL_quotient} and \ref{item:sigma_r_is_preimage},
  \[
    \dim \bigl(\sigma_r(Seg(\BP A \times X))\bigr) = \dim \bigl(\sigma_{r,r} (X)\bigr) + \dim PGL(A).
  \]
  Note that the inequality $\codim_{\BP W} X \ge r-1$
    is equivalent to $r \dim X \le \dim G (r, V)$.
  Thus by Theorem~\ref{thm_estimate_on_dim_of_Gr_secants},
    if $\codim_{\BP W} X \ge r-1$, then:
  \[
    \dim \bigl(\sigma_r(Seg(\BP A \times X))\bigr) = r \dim X + r^2 -1 = r(r+\dim X)-1
  \]
  as claimed.
  On the other hand, if $\codim_{\BP W} X < r-1$, then
  \[
    \sigma_r(Seg(\BP A \times X)) = \BP (A\otimes W) =  \sigma_r(Seg(\BP A \times \BP W)).
  \]

  Now assume $r < \dim A$.
  By Proposition~\ref{exprthm},
    the secant variety $\sigma_{r}(Seg(\BP A \times X))$ is swept out by smaller secant varieties:
  \begin{equation}\label{equ:secant_when_r_less_dim_A}
    \sigma_{r}(Seg(\BP A \times X)) =  \bigcup_{A' \subset A, \ \dim A' =r} \sigma_{r}(Seg(\BP A'  \times X)).
  \end{equation}
  Here the union is over the linear subspaces of $A$ of dimension $r$.
  We claim that if $p \in \sigma_{r}(Seg( \BP A \times X))$ is a general point,
    then there exists a unique $A'$ such that $p \in \sigma_{r}(Seg(\BP A' \times X))$.
  In fact, there is a unique $A'$ such that $p \in \BP (A' \otimes W)$.
  This is because $X$ is nondegenerate and $\dim W > r$,
    so $p = [a_1 \otimes y_1 + \dotsb + a_r \otimes y_r]$
    with the $y_i$ linearly independent in $  W$
    and the $a_i$ spanning $  A'$.
  Thus we can apply the above calculation in the case $r=\dim A$ with $A$ replaced with $A'$.
  If $\codim_{\BP W} X \ge r-1$, then:
  \begin{align*}
      \dim (\sigma_r(Seg(\BP A \times X))) & = \dim G(r, A) + \dim  (\sigma_r(Seg(\BP A' \times X)))\\
                                       & = r(\dim A -r) + r(r+\dim X)-1\\
                                       & = r (\dim A + \dim X) -1.
  \end{align*}
  If $\codim_{\BP W} X < r-1$, then we apply \eqref{equ:secant_when_r_less_dim_A}
    twice, including once with $X$ replaced by $\PP W$.
  \begin{align*}
   \sigma_{r}(Seg(\BP A \times X)) & =  \bigcup_{A' \subset A, \ \dim A' =r} \sigma_{r}(Seg(\BP A'  \times X)) \\
                                   & =  \bigcup_{A' \subset A, \ \dim A' =r} \BP(A' \otimes W)\\
                                   & =  \bigcup_{A' \subset A, \ \dim A' =r} \sigma_{r}(Seg(\BP A' \times \BP W))\\
                                   & =  \sigma_{r}(Seg(\BP A \times \BP W)).
  \end{align*}
\end{proof}

In particular, Proposition~\ref{cor:dimensions_for_secants_of_product} reproves \cite[Thm 2.4.2]{MR2392585}:
  if $X = Seg(\BP A_1\ctimes \BP A_n)$ with $\tdim A_s=\aaa_s$, $1\leq s\leq n$ and
  $\aaa_n >r\geq  \Pi_{i=1}^{n-1}\aaa_i- \sum_{i=1}^{n-1}\aaa_i-n+1$,
  then $\s_r(X)=\s_r(Seg(\BP(A_1\otimes\cdots\otimes   A_{n-1})\times \BP A_n))$.

\medskip

In \cite{BGI}, scheme-theoretic methods are used for studying rank.
For some varieties $X$, every point on $\s_r(X)$
is contained in the linear span of a  degree $r$ subscheme of $X$ (see \cite[Prop.~2.8]{BGI}).
For $X \subset \BP V$ consider the irreducible component $H_r$
of the Hilbert scheme $Hilb(X)$ containing schemes, which are $r$ distinct points with reduced structure.
Consider the rational map
\[
 \varphi : H_r \dashrightarrow G(r, V),
\]
which sends a subscheme $Z \subset X$ to its scheme-theoretic linear span.

\begin{proposition}
  If $\varphi$ is a regular map, i.e.,
   if each $Z \in H_r$ imposes independent conditions to linear forms,
  then for every $E \in \sigma_{r,k}(X)$ there exists a $0$-dimensional scheme $Z \subset X$ of degree $r$
  such that $E$ is contained in the scheme-theoretic span of $Z$.
\end{proposition}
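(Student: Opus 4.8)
The plan is to leverage the fact that $\varphi$ is a regular map on $H_r$, together with Proposition \ref{elemrank}\ref{item_measure_rank_by_sigma_r_r}, which reduces the statement for an arbitrary $E \in \sigma_{r,k}(X)$ to the case $E = F \in \sigma_{r,r}(X)$. Indeed, by that part of Proposition \ref{elemrank}, if $E \in \sigma_{r,k}(X)$ then $\ur_X(E) \le r$, so there exists $F \in \sigma_{r,r}(X)$ with $E \subset F$; hence it suffices to produce, for each $F \in \sigma_{r,r}(X)$, a degree $r$ subscheme $Z \subset X$ whose scheme-theoretic span contains (in fact equals) $F$.

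First I would treat the open dense locus $\sigma^0_{r,r}(X)$: a point $F$ there is spanned by $r$ distinct reduced points of $X$, i.e. $F = \varphi(Z)$ for $Z \in H_r$ the corresponding reduced $0$-dimensional scheme, so the conclusion is immediate on this locus. The content is the passage to the closure $\sigma_{r,r}(X)$. Here I would use that $\varphi \colon H_r \dashrightarrow G(r,V)$ is \emph{regular} by hypothesis, hence its image is closed; since that image contains the dense subset $\sigma^0_{r,r}(X)$ of $\sigma_{r,r}(X)$ (and the image is irreducible, $H_r$ being irreducible), we get $\sigma_{r,r}(X) \subseteq \varphi(H_r)$. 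Concretely: given $F \in \sigma_{r,r}(X)$, pick a curve $F(t) \in \sigma^0_{r,r}(X)$ with $F(0) = F$, lift each $F(t)$ to $Z(t) \in H_r$ with $\varphi(Z(t)) = F(t)$ (possible on the dense locus where $\varphi$ restricts to the span of $r$ reduced points), then use properness of the Hilbert scheme $H_r$ to extract a limit $Z = \lim_{t \to 0} Z(t) \in H_r$, a degree $r$ subscheme of $X$; regularity of $\varphi$ gives $\varphi(Z) = \lim_{t\to 0} \varphi(Z(t)) = F$, so the scheme-theoretic span of $Z$ is exactly $F \supseteq E$.

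The main obstacle is the interplay between the limit taken in $H_r$ and the limit of the linear spans: a priori the scheme-theoretic span could jump in the limit (the limiting scheme $Z$ might fail to impose independent conditions on linear forms, so $\langle Z \rangle$ could have dimension less than $r$ and need not be $\lim \langle Z(t)\rangle$). This is precisely what the hypothesis ``$\varphi$ is a regular map, i.e. each $Z \in H_r$ imposes independent conditions on linear forms'' is designed to rule out: it guarantees $\dim \langle Z \rangle = r$ for every $Z \in H_r$ and that $\varphi$ is continuous, so $\varphi(Z) = \lim \varphi(Z(t)) = F$. One should also note the harmless subtlety that $\varphi$ is only defined on $H_r$ (schemes in that particular component), which is exactly the component whose general member is $r$ reduced points, so the lifting $Z(t) \in H_r$ of $F(t) \in \sigma^0_{r,r}(X)$ is available and its limit stays in $H_r$. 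Packaging these observations, the conclusion follows for all $E \in \sigma_{r,k}(X)$, since $E \subset F = \langle Z \rangle$ with $Z \subset X$ of degree $r$.
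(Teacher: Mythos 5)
Your argument is correct. Note that the paper actually states this proposition without any proof, so there is nothing to compare against; your write-up supplies exactly the argument that is surely intended: reduce to $F\in\sigma_{r,r}(X)$ via Proposition \ref{elemrank}\ref{item_measure_rank_by_sigma_r_r}, observe that $\sigma^0_{r,r}(X)\subseteq\varphi(H_r)$ because an $r$-plane spanned by $r$ points of $X$ is the span of the corresponding reduced scheme, and then use that $H_r$ is projective and $\varphi$ is (by hypothesis) a morphism, so $\varphi(H_r)$ is closed and therefore contains $\sigma_{r,r}(X)$. Your identification of the key role of the hypothesis --- preventing the scheme-theoretic span from dropping dimension or jumping in the limit --- is exactly the point.
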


Thus the methods used in \cite{BGI} may be used to study
 $Seg(\BP A \times v_d(\BP^n))$ and its secant varieties.

\bibliographystyle{amsplain}

\bibliography{BLtensorLek}

\end{document}